\newcommand\nthalias[1]{\AddToHook{env/#1/begin}{\crefalias{lemma}{#1}}}
\crefname{section}{Section}{Sections}
\crefname{subsection}{\S}{\S\S}
\crefname{subsubsection}{\S}{\S\S}
\theoremstyle{plain}
\newtheorem{lemma}{Lemma}[section]
\newtheorem{proposition}[lemma]{Proposition}
\newtheorem{corollary}[lemma]{Corollary}
\newtheorem{theorem}[lemma]{Theorem}
\theoremstyle{plain}
\theoremstyle{plain}
\newtheorem{definition}[lemma]{Definition}
\newtheorem{remark}[lemma]{Remark}
\newtheorem{remarks}[lemma]{Remarks}
\crefname{definition}{definition}{definitions}
\crefname{ex}{example}{examples}
\crefname{exs}{example}{examples}
\crefname{remark}{remark}{remarks}
\crefname{remarks}{remark}{remarks}
\crefname{convention}{convention}{conventions}
\crefname{notation}{notation}{notations}
\crefname{table}{table}{tables}
\crefname{lemma}{lemma}{lemmas}
\crefname{proposition}{proposition}{propositions}
\crefname{propositionN}{proposition}{propositions}
\crefname{corollary}{corollary}{corollaries}
\crefname{corollaryN}{corollary}{corollaries}
\crefname{theorem}{theorem}{theorems}
\crefname{theoremN}{theorem}{theorems}
\crefname{enumi}{}{}
\crefname{assumption}{assumption}{Assumptions}
\crefname{construction}{construction}{Constructions}
\crefname{question}{question}{Questions}
\crefname{equation}{}{}
\numberwithin{equation}{section}
\theoremstyle{nonumberplain}
\newtheorem{proof}{Proof}
\newcommand\pf[1]{\newtheorem{#1}{Proof of \Cref{#1}}}
\newcommand\bC{{\mathbb C}}
\newcommand\bS{{\mathbb S}}
\newcommand\bT{{\mathbb T}}
\newcommand\bZ{{\mathbb Z}}
\newcommand\cL{{\mathcal L}}
\newcommand\cT{{\mathcal T}}
\newcommand\cX{{\mathcal X}}
\newcommand\fg{{\mathfrak g}}
\newcommand\fsl{\mathfrak{sl}}
\DeclareMathOperator{\id}{id}
\DeclareMathOperator{\End}{\mathrm{End}}
\DeclareMathOperator{\spn}{\mathrm{span}}
\DeclareMathOperator{\ev}{\mathrm{ev}}
\DeclareMathOperator{\coev}{\mathrm{coev}}
\DeclareMathOperator{\diag}{diag}
\DeclareMathOperator{\ootimes}{\overline{\otimes}}
\DeclareMathOperator{\hotimes}{\widehat{\otimes}}
\DeclareMathOperator{\totimes}{\widetilde{\otimes}}
\DeclareMathOperator{\ospn}{\overline{\mathrm{span}}}
\DeclareMathOperator{\G}{G}
\DeclareMathOperator{\OR}{O}
\DeclareMathOperator{\U}{U}
\newcommand{\cat}[1]{\textsc{#1}}
\newcommand{\qedhere}{\mbox{}\hfill\ensuremath{\blacksquare}}
\newcommand{\xrightarrowdbl}[2][]{%
  \xrightarrow[#1]{#2}\mathrel{\mkern-14mu}\rightarrow
}
\title{Spans of quantum-inequality projections}
\author{Alexandru Chirvasitu}
\begin{document}

\date{}

\newcommand{\Addresses}{{
  \bigskip
  \footnotesize

  \textsc{Department of Mathematics, University at Buffalo}
  \par\nopagebreak
  \textsc{Buffalo, NY 14260-2900, USA}  
  \par\nopagebreak
  \textit{E-mail address}: \texttt{achirvas@buffalo.edu}


}}

\maketitle

\begin{abstract}
  A hereditarily atomic von Neumann algebra $A$ is a $W^*$ product of matrix algebras, regarded as the underlying function algebra of a quantum set. Projections in $A\overline{\otimes}A^{\circ}$ are interpreted as quantum binary relations on $A$, with the supremum of all $p\otimes (1-p)$ representing quantum inequality. We prove that the symmetrized weak$^*$-closed linear span of all such quantum-inequality projections is precisely the symmetric summand of the joint kernel of multiplication and opposite multiplication, a result valid without the symmetrization qualification for plain matrix algebras. The proof exploits the symmetries of the spaces involved under the compact unitary group of $A$, and related results include a classification of those von Neumann algebras (hereditarily atomic or not) for which the unitary group operates jointly continuously with respect to the weak$^*$ topology.
\end{abstract}

\noindent {\em Key words: projection; irreducible representation; highest weight; unitary group; tensor square; symmetric square; exterior square; tracial state}

\vspace{.5cm}

\noindent{MSC 2020: 17B10; 22E47; 18M05; 22D10; 46L05; 46L10; 16D25; 03G12
  
}


\section*{Introduction}

The present note is an outgrowth of considerations revolving around the quantum set/relation theory developed and studied in \cite{zbMATH07287276,zbMATH07828321,zbMATH07828323}. The \emph{von Neumann algebras} \cite[\S I.9]{blk} most prominent in that formalism are the \emph{hereditarily atomic} ones: of the form
\begin{equation}\label{eq:mprod}
  A
  \cong
  \ell^{\infty}(\cX)
  :=
  \prod^{\cat{W}^*}_{i\in I}\cL(X_i)
  ,\quad
  \begin{aligned}
    \cX&:=\left\{\text{Hilbert spaces }X_i\right\},\quad \dim X_i<\infty,\\
    \cL(\bullet)&:=\text{bounded operators}\\    
  \end{aligned}  
\end{equation}
where the product's superscript denotes the product \cite[\S 3]{zbMATH03248454} in the category $\cat{W}^*$ of von Neumann (or \emph{$W^*$}-)algebras (uniformly-bounded tuples in the usual Cartesian product). \cite[Proposition 5.4]{zbMATH07287276} ensures that the term is justified: the von Neumann algebras \Cref{eq:mprod} are precisely those whose $W^*$-subalgebras (i.e. weak$^*$-closed $*$-subalgebras) are all \emph{atomic} in the sense \cite[Definition III.5.9]{tak1} that non-zero \emph{projections} (as usual \cite[Definition I.2.4.1]{blk}, self-adjoint idempotents) dominate non-zero \emph{minimal} projections.

A gadget such as \Cref{eq:mprod} is to be regarded as an algebra $\ell^{\infty}(\cX)$ of functions on a ``quantum set'' $\cX$ \cite[Definitions 2.1 and 5.2]{zbMATH07287276} (hence the $\ell^{\infty}$ notation). Projections
\begin{equation*}
  p\in A\ootimes A^{\circ}=\ell^{\infty}(\cX)\ootimes \ell^{\infty}(\cX)^{\circ}
  ,\quad
  \begin{aligned}
    A^{\circ}&:=\text{opposite algebra}\\
    \ootimes&:=\text{$W^*$ tensor product \cite[Definition IV.5.1]{tak1}}
  \end{aligned}
\end{equation*}
are interpreted as quantum binary relations on the quantum set $\cX$ attached to $A$. An important role in the resulting predicate logic is played by the \emph{equality} relation $\delta_A\in A\ootimes A^{\circ}$, defined in \cite[\S 1.1]{zbMATH07287276} as the largest projection orthogonal to all $p\otimes (1-p)$ for all projections $p\in A$. The title's \emph{quantum-inequality projections}, then, are precisely the $p\otimes (1-p)$. 

It is in this context that it becomes natural to assess just how large such spaces of projections $p\otimes (1-p)$ are. Given the bijective correspondence \cite[Theorem III.2.7]{tak1}
\begin{equation*}
  \left\{\text{projections in $N$}\right\}=:P(N)\ni p
  \xmapsto{\quad}
  Np
\end{equation*}
between projections and weak$^*$-closed left ideals of von Neumann algebras, one version of the question concerns said ideals. The $W^*$-flavored problem has an algebraic counterpart, the role of \Cref{eq:mprod} being played by the \emph{full} product \cite[Definition 5.1]{zbMATH07287276}
\begin{equation}\label{eq:prod}
  \ell(\cX):=\prod_i \cL(X_i)
  \quad
  \left(\text{Cartesian product}\right),
\end{equation}
equipped with the (\emph{locally convex} \cite[\S\S 18.1 and 18.3(5)]{k_tvs-1}) product topology. The appropriate replacement for $\ootimes$ will be $\hotimes$, the \emph{projective} \cite[\S 41.2(4)]{k_tvs-2} locally convex tensor product. Following \cite[Definition II.2.1]{tak1}, and in order to avoid collisions with \emph{strong$^*$} topologies (\cite[Definition II.2.3]{tak1}, \cite[\S I.3.1.6]{blk}), where the $*$ plays a different role, we henceforth refer to the weak$^*$ topology on a von Neumann algebra as \emph{$\sigma$-weak}.

\begin{theorem}\label{th:p1p.id}
  \begin{enumerate}[(1),wide]
  \item\label{item:th:p1p.id:vn} For a hereditarily-atomic von Neumann algebra \Cref{eq:mprod} the left (right) $\sigma$-weakly closed ideal of $\ell^{\infty}(\cX)\ootimes \ell^{\infty}(\cX)^{\circ}$ generated by all
    \begin{equation*}
      m_p:=p\otimes(1-p),\quad p\in P(\ell^{\infty}(\cX))
    \end{equation*}
    is the kernel of the multiplication map $\ell^{\infty}(\cX)\ootimes \ell^{\infty}(\cX)^{\circ}\xrightarrow{\mu} \ell^{\infty}(\cX)$ (respectively the kernel of the opposite multiplication $\mu^{\circ}$).

  \item\label{item:th:p1p.id:prod} Similarly, the closed left (right) ideal of $\ell(\cX)\hotimes \ell(\cX)^{\circ}$ generated by all $m_p$ is the kernel of $\mu$ (respectively $\mu^{\circ}$). 
  \end{enumerate}
\end{theorem}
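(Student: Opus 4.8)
Write $A$ for $\ell^\infty(\cX)$ or $\ell(\cX)$, $B$ for the ambient tensor square $A\ootimes A^\circ$ or $A\hotimes A^\circ$, $\tau$ for the topology of $B$ (the $\sigma$-weak topology, resp.\ the projective locally convex topology), and $J$ for the $\tau$-closed left ideal of $B$ generated by the $m_p$. Only the inclusion $\ker\mu\subseteq J$ needs an argument: for the reverse one, $\mu(m_p)=p(1-p)=0$ gives $m_p\in\ker\mu$, and $\ker\mu$ is a $\tau$-closed left ideal because $\mu$ is $\tau$-continuous — in the $W^*$ case via the decomposition $\ell^\infty(\cX)\ootimes\ell^\infty(\cX)^\circ\cong\prod_{i,j}(\cL(X_i)\otimes\cL(X_j)^\circ)$, under which $\mu$ is projection onto the diagonal blocks followed by the block multiplications; in the locally convex case because $\ell(\cX)$ is a complete topological algebra, so its jointly continuous multiplication factors through $\hotimes$ — and because $\mu$ is a morphism of left $B$-modules, where $A$ carries the $B$-action $(a\otimes b)\cdot c=acb$. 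The right-ideal statements and the statements for $\mu^\circ$ follow by symmetry (run the argument for $A^\circ$ in place of $A$, equivalently compose with the flip of the two tensor legs), so I treat only $\mu$ and left ideals below.

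\emph{Step 1: $\ker\mu=\overline{J_0}^{\,\tau}$, where $J_0$ is the plain algebraic left ideal of $B$ generated by the elements $s_a:=a\otimes 1-1\otimes a$, $a\in A$.} On the algebraic tensor product $A\odot A^\circ$ one has the identity $a\otimes b-ab\otimes 1=(1\otimes b)\,s_a-s_{ab}$, so any finite sum $\sum_i a_i\otimes b_i\in A\odot A^\circ$ with $\sum_i a_ib_i=0$ equals $\sum_i(a_i\otimes b_i-a_ib_i\otimes 1)\in J_0$; since $J_0\subseteq\ker\mu$ is clear, $\ker\mu\cap(A\odot A^\circ)=J_0$. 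Now $A\odot A^\circ$ is $\tau$-dense in $B$, and given $x\in\ker\mu$ with a net $x_\lambda\in A\odot A^\circ$, $x_\lambda\to x$, the corrected net $x_\lambda-\mu(x_\lambda)\otimes 1$ lies in $\ker\mu\cap(A\odot A^\circ)=J_0$ and converges to $x-\mu(x)\otimes 1=x$ (continuity of $\mu$ and of $c\mapsto c\otimes 1$, and $\mu(x)=0$); so $\ker\mu=\overline{J_0}^{\,\tau}$.

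\emph{Step 2: $s_a\in V:=\overline{\spn}^{\,\tau}\{m_p:p\in P(A)\}$ for every $a\in A$.} For a projection $p$, $p\otimes 1-1\otimes p=p\otimes(1-p)-(1-p)\otimes p=m_p-m_{1-p}$, so $s_p\in\spn\{m_q:q\in P(A)\}\subseteq V$. The span of the projections of $A$ is $\tau$-dense in $A$ — for $\ell^\infty(\cX)$ the algebraic direct sum $\bigoplus_i\cL(X_i)$ is $\sigma$-weakly dense, for $\ell(\cX)$ it is dense in the product topology, and each finite-dimensional $\cL(X_i)$ is spanned by projections belonging to $P(A)$ — and $a\mapsto s_a$ is $\tau$-continuous, since $c\mapsto c\otimes 1$ and $c\mapsto 1\otimes c$ are continuous maps $A\to B$ (the canonical maps into the tensor square, the second precomposed with the topological $*$-antiautomorphism $A\cong A^\circ$). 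Applying $a\mapsto s_a$ to a net in $\spn\{p:p\in P(A)\}$ approximating $a$ puts $s_a$ in $\overline{\spn}^{\,\tau}\{s_p:p\in P(A)\}\subseteq V$. Combining the two steps, $J\supseteq V\ni s_a$ for all $a\in A$, hence $J\supseteq J_0$, hence $J\supseteq\overline{J_0}^{\,\tau}=\ker\mu$, which is the missing inclusion.

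Both parts run through verbatim; the only case-sensitive inputs are the $\tau$-continuity of $\mu$, the $\tau$-density of the algebraic tensor product in $B$, and the $\tau$-density of the linear span of projections in $A$. These are routine, but are the points to handle with care, the locally convex half resting on $\ell(\cX)$ being a complete topological algebra so that the universal property of $\hotimes$ applies; I expect no further obstacle, the one mildly delicate move being the correction $x_\lambda\mapsto x_\lambda-\mu(x_\lambda)\otimes 1$ in Step 1 that keeps the approximants inside $\ker\mu$.
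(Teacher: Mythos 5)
Your proof is correct, but it takes a genuinely different route from the paper's. The paper proves this theorem as a corollary of its representation-theoretic machinery: it decomposes $\ell^{\infty}(\cX)\ootimes\ell^{\infty}(\cX)^{\circ}$ into blocks $\cL(X_i)\otimes\cL(X_j)^{\circ}$, observes that the off-diagonal blocks are swallowed whole by the ideal, and reduces to the single-matrix-algebra case of \Cref{cor:id.p1p} — which in turn rests on the decomposition of $\fsl_n^{\otimes 2}$ into irreducible $\U(n)$-representations (\Cref{pr:p1p.spn.mat}) plus the simplicity of $\ker\mu/(\ker\mu\cap\ker\mu^{\circ})\cong\fsl_n$. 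You instead give the ``direct, standard von-Neumann-algebra'' argument that the introduction alludes to but does not write out: the algebraic kernel of $\mu$ is the left ideal generated by the commutation elements $s_a=a\otimes 1-1\otimes a$ (via the identity $a\otimes b-ab\otimes 1=(1\otimes b)s_a-s_{ab}$), the correction $x_\lambda\mapsto x_\lambda-\mu(x_\lambda)\otimes 1$ upgrades this to the topological closure, and $s_p=m_p-m_{1-p}$ together with density of $\spn P(A)$ in $A$ puts every $s_a$ in the closed span of the $m_p$. This is more elementary (no weight-theoretic input at all) and visibly more general — it would apply to any von Neumann algebra, since the span of projections is always norm-dense — whereas the paper's route is essentially free given that \Cref{pr:p1p.spn.mat} is needed anyway for the finer span result of \Cref{th:p1p.spn}. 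One cosmetic slip: your $J_0$ is the algebraic left ideal of the full tensor square $B$, so the displayed equality $\ker\mu\cap(A\odot A^{\circ})=J_0$ should read as the two containments $\ker\mu\cap(A\odot A^{\circ})\subseteq J_0\subseteq\ker\mu$, which is all your argument actually uses.
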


This can be seen directly, by fairly standard von-Neumann-algebra machinery. The left ideal in \Cref{th:p1p.id}\Cref{item:th:p1p.id:vn} is nothing but $\left(A\ootimes A^{\circ}\right)(1-\delta_A)$, and \cite[Proposition 1.4]{ck_diskac_pre} (say) is not difficult to translate into \Cref{th:p1p.id}\Cref{item:th:p1p.id:vn}. The theorem is also a consequence, however, of determining what the appropriately closed \emph{linear span} of all $p\otimes (1-p)$ looks like. One (perhaps less guessable) answer reads as follows, with signed subscripts $\pm$ respectively denoting the $\pm 1$-eigenspaces of the flip operator $a\otimes b\xmapsto{\sigma} b\otimes a$.

\begin{theorem}\label{th:p1p.spn}
  \begin{enumerate}[(1),wide]
  \item\label{item:th:p1p.spn:vn} Let $A$ be a hereditarily-atomic von Neumann algebra \Cref{eq:mprod} and set
    \begin{equation}\label{eq:spn.mp}
      \cat{INQ}(A):=\ospn \left\{m_p:=p\otimes(1-p)\ :\ p\in P(A)\right\}
      \le
      A\ootimes A
    \end{equation}
    ($\sigma$-weak closure).
    
    The space
    \begin{equation*}
      \cat{INQ}(A)_+
      =
      \ospn \left\{m_p+\sigma m_p\ :\ p\in P(A)\right\}
    \end{equation*}
    equals
    \begin{equation*}
      \left(\ker\mu\bigcap\ker\mu^{\circ}\right)_+
      ,\quad\text{where}\quad
      \begin{aligned}
        \mu
        &:=\text{multiplication}\\
        \mu^{\circ}
        &:=\text{opposite multiplication}.
      \end{aligned}      
    \end{equation*}

  \item\label{item:th:p1p.spn:prod} The analogous result holds for products $A$ as in \Cref{eq:prod}, with closures in the product topology and $\hotimes$ in place of $\ootimes$. 
  \end{enumerate}
\end{theorem}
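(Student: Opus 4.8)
The containment $\cat{INQ}(A)_+\subseteq(\ker\mu\cap\ker\mu^{\circ})_+$ is immediate, since $\mu(m_p)=p(1-p)=0=(1-p)p=\mu^{\circ}(m_p)$, so everything lies in the reverse containment, which I would attack through the conjugation action of the compact group $K:=\prod_i\U(n_i)$. Take $K$ to act $\sigma$-weakly continuously on $A$ (the relevant instance of the joint-continuity results advertised in the abstract) and diagonally, commuting with $\sigma$, on $A\ootimes A$; then $\cat{INQ}(A)_+$ and $(\ker\mu\cap\ker\mu^{\circ})_+$ are both $\sigma$-weakly closed $K$-submodules. By the Peter--Weyl decomposition of a continuous representation of a compact group on a complete locally convex space, a closed $K$-submodule is the closed span of its isotypic components; hence it suffices to check, for each irreducible $K$-module $V$, that the $V$-isotypic projection $\Pi_V$ sends $\cat{INQ}(A)_+$ onto $(\ker\mu\cap\ker\mu^{\circ})_+\cap(V\text{-isotypic part})$.

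For this I would use that $A\ootimes A=\prod^{\cat{W}^*}_{i,j}M_{n_i}\otimes M_{n_j}$, that $\mu,\mu^{\circ}$ annihilate every off-diagonal block $M_{n_i}\otimes M_{n_j}$ ($i\neq j$) because the units of the summands are orthogonal central projections, and that they restrict to the usual $\mu_i,\mu_i^{\circ}$ on the diagonal blocks; also that $M_{n_i}=\bC1\oplus\fsl_{n_i}$ as a $\U(n_i)$-module. The irreducibles $V$ of $K$ occurring in $A\ootimes A$ are of four types: the trivial one; those supported on a single index $i$ and distinct from $\fsl_{n_i}$, which live only inside $M_{n_i}\otimes M_{n_i}$; $V=\fsl_{n_i}\boxtimes\fsl_{n_j}$ with $i\neq j$, living only in the pair of blocks $(i,j),(j,i)$; and $V=\fsl_{n_i}$, which lives in $M_{n_i}\otimes M_{n_i}$ and, as $\fsl_{n_i}\otimes1$ resp.\ $1\otimes\fsl_{n_i}$, in every block $(i,k)$ resp.\ $(k,i)$. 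Using the block decomposition of a projection $p=(p_i)_i\in P(A)$ together with the expansion on the diagonal block
\[
  m_p = t_i(1-t_i)\,1\otimes1+(1-t_i)\,(p_i)_0\otimes1-t_i\,1\otimes(p_i)_0-(p_i)_0\otimes(p_i)_0,\qquad (p_i)_0:=p_i-t_i,\ \ t_i:=\rk(p_i)/n_i,
\]
and the analogous simpler expressions on off-diagonal blocks, one writes $\Pi_V(m_p+\sigma m_p)$ out explicitly in each case and checks that, as $p$ ranges over $P(A)$, these span the target. For the third type and the off-diagonal part of the fourth this reduces to the elementary observation that $\{(p)_0:p\in P(M_n)\}$ spans $\fsl_n$ (so $\{(p_i)_0\otimes(p_j)_0\}$ spans $\fsl_{n_i}\otimes\fsl_{n_j}$); tracking which isotypic copies survive under $\sigma$ and inside $\ker\mu\cap\ker\mu^{\circ}$ then pins down the $\sigma$-symmetric copy as the whole $V$-part of $(\ker\mu\cap\ker\mu^{\circ})_+$. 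For the trivial type one gets explicit scalar coefficients in the $t_i$'s, and a short argument — over the rank-$0$/$1$ projections the relevant $0$–$1$ vectors close up under Hadamard products, and the intermediate-rank projections then recover the $\fsl_{n_i}$-Casimir contributions — shows the closed span is everything.

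The one type demanding genuine representation theory is a single matrix factor, and the governing statement is:

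\medskip
\noindent\textbf{Key Lemma.} $\spn\{(p)_0\otimes(p)_0:p\in P(M_n)\}=S^2\fsl_n$.

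\medskip
\noindent This is what produces the ``higher'' irreducibles, and it has a clean proof: for mutually orthogonal projections $q,q'$ one has $q+q'\in P(M_n)$ and $(q)_0+(q')_0=(q+q')_0$, so polarizing along an orthogonal spectral decomposition shows the left-hand side contains $\spn\{a_0\otimes a_0:a\in M_n^{\mathrm{sa}}\}$, which is all of $S^2\fsl_n$. Granting it, $\Pi_{\fsl_n}$ and the $\Pi_V$ for $V\subseteq S^2\fsl_n$ nontrivial, applied to the $m_p+\sigma m_p$, exhaust the matching pieces of $(\ker\mu_n\cap\ker\mu_n^{\circ})_+$ — which, after the $\mu_n,\mu_n^{\circ}$ bookkeeping (these differ only on $\Lambda^2\fsl_n$, where they are negatives, hence agree on the $\sigma$-symmetric part), is exactly the trivial summand plus a single surviving copy of $\fsl_n$ (for $n\geq3$) plus the non-trivial, non-adjoint irreducibles of $S^2\fsl_n$.

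Assembling the four types over all $V$, and using that finitely-supported elements are $\sigma$-weakly dense in a $\prod^{\cat{W}^*}$ (so the closed span of the block-wise contributions is the whole space), yields $\cat{INQ}(A)_+=(\ker\mu\cap\ker\mu^{\circ})_+$. Part (2) is verbatim with $\ootimes$ replaced by $\hotimes$ and $\sigma$-weak closures by product-topology ones, the continuity of the $K$-action and the density of finitely-supported elements now being automatic. I expect the main obstacle to be the case $V=\fsl_{n_i}$: there one must carefully separate the many isotypic copies spread across the blocks $(i,i)$ and $(i,k),(k,i)$ and isolate the $\sigma$-symmetric sub-copy that lies in $\ker\mu\cap\ker\mu^{\circ}$; by contrast the Key Lemma (once the orthogonality trick is spotted) and the Peter--Weyl reduction are comparatively routine.
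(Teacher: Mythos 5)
Your overall architecture---reduction to isotypic components under $K=\prod_i\U(n_i)$, the block decomposition $A\ootimes A\cong\prod_{i,j}M_{n_i}\otimes M_{n_j}$, and a case division by type of irreducible---is the same as the paper's, and your Key Lemma is correct and is a cleaner route to the ``higher'' irreducibles than the paper's highest-weight-vector computations: the orthogonal-polarization identity $(q+q')_0\otimes(q+q')_0-q_0\otimes q_0-q'_0\otimes q'_0=q_0\otimes q'_0+q'_0\otimes q_0$ together with spectral decomposition does give $\spn\{p_0\otimes p_0\}=S^2\fsl_n$, hence the $\fg^{(2)}$- and $\fg^{(1^2)}$-isotypic pieces. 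The genuine gap is the step you yourself flag as ``the main obstacle'': the adjoint type $V=\fsl_{n_i}$ is not a deferred verification but the crux of the theorem, and nothing in your outline resolves it. On a single diagonal block a multiplicity count does close the case (the $\sigma$-symmetric $\fsl_{n}$-isotypic part of $\ker\mu\cap\ker\mu^{\circ}$ is a single copy, so any nonzero $\Pi_{\fsl_{n}}(m_p+\sigma m_p)$ generates it), but over a product the target $\left(\ker\mu\cap\ker\mu^{\circ}\right)_+\cap(\fsl_{n_i}\text{-isotypic})$ contains one copy of $\fsl_{n_i}$ for \emph{every} $k\ne i$---namely the symmetrization of $\fsl_{n_i}\otimes\bC 1_k\oplus\bC 1_k\otimes\fsl_{n_i}$, which lies entirely in $\ker\mu\cap\ker\mu^{\circ}$ because distinct blocks multiply to zero---plus the copy from block $(i,i)$. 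That is an infinite-dimensional multiplicity space when $I$ is infinite, and exhausting it by the closed span of the $\Pi_{\fsl_{n_i}}(m_p+\sigma m_p)$ is where the paper spends most of its effort (the elements $\delta_{p_S}$, the averaging over $\U_{\widehat{i}}$, and the Boolean-algebra embedding used to pass from finite to infinite index sets). The trivial type has the same flavor: producing the $(1_i\otimes 1_j)^+$ requires the inclusion--exclusion argument of \Cref{le:cnst}, not merely ``closing up under Hadamard products,'' and again a separate device to descend from finite subproducts to the full product.

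A second, repairable but real, defect is your continuity premise. The conjugation action of $K$ on $A$ is \emph{not} jointly continuous for the $\sigma$-weak topology unless all but finitely many $n_i$ equal $1$; that is exactly the content of \Cref{pr:cont.act.virt.cls}, so ``the joint-continuity results advertised in the abstract'' assert the opposite of what you need. The Peter--Weyl reduction survives, but only because the action is separately continuous, $A\ootimes A$ is $\sigma$-weakly quasi-complete, and the individual isotypic projections are continuous (\Cref{le:cont.proj}); those are the facts that license ``a closed invariant subspace is determined by its isotypic components,'' and they have to be argued rather than assumed.
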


The \emph{anti}symmetric counterpart to the statement does not hold in full generality, but does for matrix algebras (\Cref{pr:p1p.spn.mat}):
\begin{equation}\label{eq:mn.mu.mu}
  \forall n\in \bZ_{\ge 1}
  \quad:\quad
  \cat{INQ}(M_n)
  =
  \left(\ker\mu\bigcap \ker\mu^{\circ}\right). 
\end{equation}

One perspective on such results as \Cref{eq:mn.mu.mu} and \Cref{th:p1p.spn} casts them as aids to elucidating and strengthening the relationship between the two types of structure involved:
\begin{itemize}[wide]
\item On the one hand, linearity is natural and ubiquitous in quantum mechanics, where it underlies both \emph{superposition} \cite[\S 1.2]{wev_mq} and the formation of \emph{mixed states} (\cite[\S 2-2, pp.79-81]{mack_qm_1963}, \cite[\S 2.3, post Fig.2.2]{ball_qm_2e_2015}) from pure.
 
\item On the other, multiplicative structure is notoriously more problematic and mysterious, with operator commutativity being a manifestation of the mutual compatibility of the corresponding observables and non-commutativity underpinning the celebrated \emph{uncertainty principle} \cite[(8.31) and (8.33)]{ball_qm_2e_2015}. 
\end{itemize}

In the context of the predicate calculus afforded by quantum sets and relations, \Cref{th:p1p.spn} and its matrix-algebra precursor \Cref{eq:mn.mu.mu} serve to further highlight the intimate connection between the linear and multiplicative aspects of the objects involved. 

The proof of \Cref{th:p1p.spn} is representation-theoretic in nature (as is that of \Cref{eq:mn.mu.mu}), leveraging the symmetry of all spaces in sight under both
\begin{itemize}[wide]
\item the flip map interchanging tensorands;
\item and the unitary group $\U(A)$, compact under its product topology (coincident on $\U(A)$ with any of the weaker-than-norm topologies one typically \cite[\S 2.1.7]{ped-aut} equips a von Neumann algebra with). 
\end{itemize}

Even the finite-dimensional (or indeed, single-matrix-algebra: \Cref{pr:p1p.spn.mat}) case requires an analysis of how $\cat{INQ}(A)$ decomposes under the conjugation action by $\U(A)$. Additionally, the infinite-dimensional version of the discussion prompts a number of considerations and side-notes on the applicability of the usual machinery \cite[Chapters 3 and 4]{hm5} of compact-group actions on locally convex topological vector spaces. This spills over into \Cref{se:jcont.wast}; its contents, perhaps of some independent interest, include:
\begin{itemize}[wide]
\item The characterization in \Cref{pr:cont.act.virt.cls} of those quantum sets for which the conjugation action of $\U(A)$ on some (equivalently, all) tensor powers $A^{\ootimes n}$, $A:=\ell^{\infty}(\cX)$ is jointly continuous for the weak topology on $\U(A)$ and $\sigma$-weak topology on $A$: they are precisely what we call the \emph{virtually classical} quantum sets, i.e. those $\cX=\left\{X_i\right\}$ with at most finitely many $(\ge 2)$-dimensional $X_i$. 

\item The analogous characterization (\Cref{th:cofin.ablnz}) of those von Neumann algebras $A$ for which the unitary group acts jointly continuously on any (all) of the tensor powers $A^{\ootimes n}$ as precisely the ``finite-dimensional-by-abelian'' ones
  \begin{equation*}
    A\cong F\times A_{ab}
    ,\quad
    \dim F<\infty
    \quad\text{and}\quad
    A_{ab}\text{ abelian}. 
  \end{equation*}

\item a discussion of automatic joint continuity under appropriate conditions (\Cref{le:fin.isot.jcont}) and its failure otherwise (\hyperref[res:joint.cont]{Remarks~\ref*{res:joint.cont}}). 
\end{itemize}


\subsection*{Acknowledgments}

I am grateful for numerous illuminating exchanges with A. Kornell on and around quantum sets, relations, functions and like structures. 


\section{Linear spans of projection tensors}\label{se:spn.proj}

$\U(A)$ will denote the unitary group of a $C^*$-algebra $A$, with the abbreviations
\begin{equation*}
  \U(n):=\U(M_n)
  \quad\text{for}\quad
  M_n:=M_n(\bC). 
\end{equation*}
We lay some of the groundwork for handling the matrix case $A\cong M_n$ of \Cref{th:p1p.spn}. Set $V:=\bC^n$, $n\in \bZ_{\ge 1}$ with a fixed inner product $\braket{-\mid -}$ throughout, $(e_i)_{i=1}^n$ being an orthonormal basis. The maps
\begin{equation*}
  V^*\otimes V
  \xrightarrow[\quad\left(\text{evaluation}\right)\quad]{\quad\ev\quad}
  \bC
  \quad\text{and}\quad
  \bC
  \ni 1
  \xmapsto[\quad\left(\text{coevaluation}\right)\quad]{\quad\coev\quad}
  \sum_i e_i\otimes e_i^*
  \in
  V\otimes V^*
\end{equation*}
make the algebraic dual $V^*$ into a \emph{left dual} \cite[Definition 2.10.1]{egno} of $V^*$ in familiar monoidal-category terminology.

There is an identification
\begin{equation*}
  M_n:=M_n(\bC)\cong \End(V)\cong V\otimes V^*,
\end{equation*}
with
\begin{equation*}
  M_n\otimes M_n\cong (V\otimes V^*)\otimes (V\otimes V^*)
  \xrightarrow[\quad\mu:=\text{multiplication map}\quad]{\quad\id_V\otimes \ev\otimes \id_{V^*}\quad}
  V\otimes V^*
  \cong
  M_n.
\end{equation*}
$\fsl_n\subset M_n$ is the Lie algebra of traceless $n\times n$ matrices. The matrix-algebra version of \Cref{th:p1p.spn} reads as follows.

\begin{proposition}\label{pr:p1p.spn.mat}
  Let $n\in \bZ_{\ge 1}$. The following subspaces of $M_n\otimes M_n$ coincide.
  \begin{enumerate}[(a),wide]
    
  \item\label{item:pr:p1p.spn.mat:spn.mp} the span
    \begin{equation}\label{eq:spn.mp.mat}
      \spn\left\{m_p:=p\otimes(1-p)\ :\ p\in P(M_n)\right\};
    \end{equation}
    
  \item\label{item:pr:p1p.spn.mat:mumu} the joint kernel
    \begin{equation}\label{eq:mumu.mat}
      \ker\mu\bigcap\ker\mu^{\circ}
      \xrightarrow[\quad\cong\quad]{\quad\text{as $\U(n)$-representations}\quad}
      \fsl_n^{\otimes 2}.
    \end{equation}
  \end{enumerate}  
\end{proposition}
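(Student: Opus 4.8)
The plan is to prove the two inclusions separately, extracting the $\U(n)$-module structure of the joint kernel en route. The case $n=1$ is trivial: the only projections are $0,1$, both $m_p$ vanish, and $\ker\mu=0=\fsl_1^{\otimes2}$; so assume $n\ge2$ from now on. The inclusion (a)$\subseteq$(b) is immediate, since $\mu(m_p)=p(1-p)=0$ and $\mu^{\circ}(m_p)=(1-p)p=0$ for every projection $p$, whence $\spn\{m_p\}\subseteq\ker\mu\cap\ker\mu^{\circ}=:K$.

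I would then identify $K$ as a $\U(n)$-representation, which also gives the isomorphism $K\cong\fsl_n^{\otimes2}$ asserted in (b). The conjugation action of the compact group $\U(n)$ makes every finite-dimensional module semisimple, $M_n\cong\bC 1\oplus\fsl_n$ with $\fsl_n$ irreducible, and hence $W:=M_n\otimes M_n\cong\fsl_n^{\otimes2}\oplus\fsl_n^{\oplus2}\oplus\bC$. Now $\mu$ and $\mu^{\circ}$ are surjective, $\U(n)$-equivariant, and satisfy $\tr\circ\mu=\tr\circ\mu^{\circ}$ (as $\tr(ab)=\tr(ba)$), so $\im(\mu\oplus\mu^{\circ})$ sits inside the codimension-one subspace $\Delta:=\{(X,Y)\in M_n\oplus M_n:\tr X=\tr Y\}\cong\fsl_n^{\oplus2}\oplus\bC$; conversely it contains the diagonal (image of $X\otimes1$), the scalar line (image of $1\otimes1$), and an off-diagonal vector such as $(E_{12},0)=(\mu\oplus\mu^{\circ})(E_{11}\otimes E_{12})$, and equivariance together with irreducibility of $\fsl_n$ then force $\im(\mu\oplus\mu^{\circ})=\Delta$. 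Hence $\dim K=n^4-(2n^2-1)=(n^2-1)^2$ and $W/K\cong\Delta$; cancelling the common summand $\fsl_n^{\oplus2}\oplus\bC$ (Krull--Schmidt, valid by semisimplicity) from $W\cong K\oplus(W/K)$ yields $K\cong\fsl_n^{\otimes2}$.

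The substance is the reverse inclusion $K\subseteq\spn\{m_p\}=:S$. Its engine is that $S$ carries \emph{both} symmetries of $W$: conjugation sends $m_p$ to $m_{upu^{*}}$, so $S$ is a $\U(n)$-submodule of $K$, and the flip sends $m_p$ to $m_{1-p}$, so $S$ is also $\sigma$-stable, $S=S_{+}\oplus S_{-}$. By semisimplicity, $S=K$ reduces to showing that $S$ meets every isotypic component of $\fsl_n^{\otimes2}\cong K$ in its full multiplicity space. I would therefore record the Clebsch--Gordan decomposition of $\fsl_n^{\otimes2}$ --- a top constituent (highest weight twice the adjoint's), one trivial summand (the Killing form), the adjoint $\fsl_n$ occurring once in $\Lambda^2\fsl_n$ via the bracket and, for $n\ge3$, once more in $S^2\fsl_n$ via the invariant symmetric $d$-tensor, together with the remaining constituents --- and, for each type, exhibit an explicit highest-weight vector of that type lying in $S$: the symmetric ones out of the identity $m_p+m_q-m_{p+q}=p\otimes q+q\otimes p$ valid for orthogonal projections $p,q$ (applied to coordinate projections and their $\U(n)$-conjugates), the antisymmetric ones by antisymmetrising suitable $m_p$. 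A dimension count $\dim S=(n^2-1)^2$ then finishes the proof.

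The step I expect to be the real obstacle is the adjoint-isotypic component, which for $n\ge3$ has multiplicity two: one must produce a two-dimensional space of $\fsl_n$-highest-weight vectors inside $S$ and check that it lies neither entirely in the ``bracket'' copy nor entirely in the ``$d$-tensor'' copy. This is precisely where the interaction between $S_{+}$ and $S_{-}$ and between projections of different rank must be exploited, and where the bookkeeping is heaviest; the other constituents are reached comparatively directly.
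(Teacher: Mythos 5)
Your proposal is correct and follows essentially the same route as the paper's proof: decompose $\fg^{\otimes 2}$, $\fg:=\fsl_n$, into $\U(n)$-irreducibles (split first by the flip $\sigma$ into $S^2\fg\oplus\wedge^2\fg$), and show the $\sigma$-stable $\U(n)$-submodule $\spn\{m_p\}$ meets every isotypic component in full multiplicity, the only cosmetic differences being your Krull--Schmidt/dimension-count identification of $\ker\mu\cap\ker\mu^{\circ}$ with $\fg^{\otimes 2}$ (the paper instead cycles tensorands and intersects kernels of evaluations) and your aim to exhibit highest-weight vectors \emph{inside} the span rather than pairing external highest-weight vectors against the $m_p$ via the invariant inner product. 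The one step you flag as the real obstacle --- the multiplicity-two adjoint component --- is in fact dissolved by the $\sigma$-grading you have already set up, since the two copies of $\fg$ live in $S^2\fg$ and $\wedge^2\fg$ separately and hence each occurs with multiplicity one in its flip-eigenspace; the paper detects them there by applying the partial trace $\id\otimes n\tau$ to $p\otimes(1-p)\pm(1-p)\otimes p$ and checking the result is non-scalar.
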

\begin{proof}
  That \Cref{eq:spn.mp.mat} $\subseteq$ \Cref{eq:mumu.mat} is self-evident, so we focus on the opposite inclusion assuming $n\ge 2$ (for the case $n=1$ is trivial). 
  
  \begin{enumerate}[(I),wide]
  \item\label{item:pr:p1p.spn.mat:pfiso} \textbf{: The isomorphism claimed in \Cref{eq:mumu.mat}.} Observe that
    \begin{equation*}
      \begin{aligned}
        \ker\mu
        &=\ker\left(V\otimes V^*\otimes V\otimes V^*\xrightarrow{\ev_{23}}V\otimes V^*\right)        
          \quad\text{and}\quad\\
        \ker\mu^{\circ}
        &=\ker\left(V\otimes V^*\otimes V\otimes V^*\xrightarrow{\ev_{14}}V^*\otimes V\right),
      \end{aligned}
    \end{equation*}
    with $\ev_{ij}$ denoting evaluation of the $i^{th}$ tensorand against the $j^{th}$. Cycling the tensorands so that the first becomes the last and writing $\fg:=\fsl_n$, the intersection of the two kernels will be (isomorphic to)
    \begin{equation*}
      \left(\ker\ev\right)^{\otimes 2}
      \cong
      \fg^{\otimes 2}
      \subset
      M_n^{\otimes 2}
      \cong
      \left(V\otimes V^*\right)^{\otimes 2}
      \cong
      \left(V^*\otimes V\right)^{\otimes 2},
    \end{equation*}
    all maps in sight being equivariant for the tensor various products of copies of the \emph{adjoint action} \cite[\S 8.1]{fh_rep-th} of $\U(n)$ on $\fg$ and its (\emph{co}adjoint) dual. 

  \item \textbf{: General remarks.} Note first that \Cref{eq:spn.mp.mat} is an $\U(n)$-subrepresentation of $\fg^{\otimes 2}$, so we can work $\U(n)$-equivariantly throughout.

    We remind the reader how $\fg^{\otimes 2}$ decomposes into irreducible subrepresentations; this is worked out (say) in \cite[\S 1]{2212.13787v1} for $n\ge 4$, and the picture can easily be completed to the two lower cases. We always have the decomposition
    \begin{equation*}
      \fg^{\otimes 2}
      =
      S^2\fg \oplus \wedge^2\fg
      \quad
      \left(\text{symmetric and exterior squares respectively}\right),
    \end{equation*}
    and the two summands further decompose as follows. 

    \begin{itemize}[wide]
    \item If $n\in \bZ_{\ge 4}$ then
      \begin{equation*}
        \begin{aligned}
          S^2\fg
          &\cong
            \fg^{(2)}\oplus \fg^{(1^2)}\oplus \fg\oplus \mathbf{1}
            ,\quad
            \mathbf{1}:=\text{trivial representation}\\
          \wedge^2 \fg
          &\cong
            \fg^{(1^2,2)}\oplus \fg^{(2,1^2)}\oplus \fg,
        \end{aligned}        
      \end{equation*}
      where the \emph{highest weights} attached \cite[Theorem 14.18]{fh_rep-th} to the irreducible $\U(n)$-representations $\fg^{(2)}$, $\fg^{(1^2)}$, $\fg^{(1^2,2)}$ and $\fg^{(2,1^2)}$ with respect to the maximal torus
      \begin{equation*}
        \bT:=\left\{\diag(z_i)_{i=1}^n\ :\ z_i\in \bS^1\subset \bC\right\}\subset \U(n)
      \end{equation*}
      are
      \begin{equation*}
        \begin{aligned}
          \fg^{(2)}
          &\xrightarrow{\qquad}
            z_1^2 z_n^{-2}\\
          \fg^{(1^2)}
          &\xrightarrow{\qquad}
            z_1z_2 z_{n-1}^{-1}z_n^{-1}\\
          \fg^{(1^2,2)}
          &\xrightarrow{\qquad}
            z_1^2 z_{n-1}^{-1} z_n^{-1}\\
          \fg^{(2,1^2)}
          &\xrightarrow{\qquad}
            z_1 z_2 z_n^{-2}.\\
        \end{aligned}
      \end{equation*}

    \item If $n=3$ then the $\fg^{(1^2)}$ summand is absent. 

    \item If $n=2$ then 
      \begin{equation*}
        S^2\fg
        \cong
        \fg^{(2)}\oplus \mathbf{1}
        \quad\text{and}\quad
        \wedge^2 \fg
        \cong
        \fg.
      \end{equation*}
    \end{itemize}

  \item \textbf{: The ``bulk'' summands $\fg^{\bullet}$.} In terms of the usual matrix units $e_{ij}\in M_n$, $1\le i,j\le n$, the following elements $x\in M_n^{\otimes 2}$ are highest-weight vectors for the summands denoted by superscripts as follows:
    \begin{equation*}
      \begin{aligned}
        \fg^{(2)}
        &\quad:\quad e_{1n}\otimes e_{1n}\\
        \fg^{(1^2)}
        &\quad:\quad e_{1n}\otimes e_{2,n-1}+e_{2,n-1}\otimes e_{1n} + e_{1,n-1}\otimes e_{2,n}+e_{2,n} \otimes e_{1,n-1}\\
        g^{(1^2,2)}
        &\quad:\quad
          e_{1n}\otimes e_{1,n-1}-e_{1,n-1}\otimes e_{1n}
        \\
        g^{(2,1^2)}
        &\quad:\quad
          e_{1n}\otimes e_{2n}-e_{2n}\otimes e_{1n}.
      \end{aligned}      
    \end{equation*}
    In each case it is a simple remark that $\braket{x\mid m_p}$, $p\in P(M_n)$ cannot all vanish, where $\braket{-\mid-}$ denotes the $\U(n)$-invariant inner product
    \begin{equation}\label{eq:abcd.inner.prod}
      \braket{a\otimes b\mid c\otimes d}
      :=
      \tau(a^*c)\tau(b^*d)
      ,\quad
      \tau:=\text{normalized trace}
    \end{equation}
    on $M_n^{\otimes 2}$. It follows that the space \Cref{eq:spn.mp.mat} cannot avoid any of the $\fg^{\bullet}$ summands. 

  \item \textbf{: The trivial representation.} Simply observe that
    \begin{equation*}
      \mathrm{av}_p
      :=
      \int_{\U(n)}g m_p\ \mathrm{d}\mu_{\U(n)}(g)
      \in
      \text{space \Cref{eq:spn.mp.mat}}
      ,\quad
      \mu_{\U(n)}:=\text{\emph{Haar measure} \cite[\S I.5]{btd_lie_1995} on $\U(n)$}
    \end{equation*}
    is $\U(n)$-fixed and non-zero whenever the projections $p$ and $1-p$ are both non-zero, for in that case
    \begin{equation*}
      \braket{\mathrm{av}_p\mid 1}=\tau(p)\tau(1-p)\ne 0.
    \end{equation*}

  \item \textbf{: The $\fg$ summands.} Recall from step \Cref{item:pr:p1p.spn.mat:pfiso} above that we are realizing $\fg^{\otimes 2}$ as a subspace of $M_n^{\otimes 2}\cong (V\otimes V^*)^{\otimes 2}$ by embedding one $\fg$ in the inner $V^*\otimes V\cong V\otimes V^*$ and the other in the outer $V\otimes V^*$. With that convention, applying the trace $n\tau$ to the right-hand $M_n$ in the ambient space $M_n^{\otimes 2}$ gives $\U(n)$-equivariant maps to $M_n$ when restricted to either $S^2 \fg$ or $\wedge^2 \fg$. Next note that
    \begin{equation*}
      \wedge^2\fg
      \ni
      p\otimes (1-p)-(1-p)\otimes p
      \xmapsto{\quad\id\otimes n\tau\quad}
      n\tau(1-p)p-n\tau(p)(1-p)\in \fg
    \end{equation*}
    is always non-scalar somewhere, while 
    \begin{equation*}
      S^2\fg
      \ni
      p\otimes (1-p)+(1-p)\otimes p
      \xmapsto{\quad\id\otimes n\tau\quad}
      n\tau(1-p)p+n\tau(p)(1-p)\in \fg
    \end{equation*}
    takes only scalar values precisely when $n=2$, in which case there is no $\fg$ summand in $S^2\fg$ to begin with. This suffices to conclude that
    \begin{equation*}
      \fg\le \spn\{m_p\}\cap S^2\fg
      \ \left(n\ge 3\right)
      \quad\text{and}\quad
      \fg\le \spn\{m_p\}\cap \wedge^2\fg,
    \end{equation*}
    finishing the proof.     
  \end{enumerate}
\end{proof}

The statement of the \Cref{cor:id.p1p}, recording a consequence of \Cref{pr:p1p.spn.mat}, follows standard practice (e.g. \cite[\S 10.1]{pierce_assoc}) in writing $A^e:=A\otimes A^{\circ}$ for the \emph{enveloping algebra} of an algebra $A$.

\begin{corollary}\label{cor:id.p1p}
  The left (right) ideal of $M_n^e$ generated by all $p\otimes (1-p)$ is precisely $\ker\mu$ (respectively $\ker\mu^{\circ}$).  
\end{corollary}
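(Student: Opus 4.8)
The plan is to read this off from \Cref{pr:p1p.spn.mat} together with the ideal structure of the simple algebra $M_n^e\cong M_{n^2}(\bC)$ (the case $n=1$ being vacuous, so one may assume $n\ge 2$). First I would record that $\ker\mu$ genuinely is a left ideal: regarding $M_n$ as the (up to isomorphism unique) simple left $M_n^e$-module via $(a\otimes b)\cdot x:=axb$, the map $\mu$ is just $z\mapsto z\cdot 1_{M_n}$, hence left $M_n^e$-linear and onto, so $\ker\mu$ is a \emph{maximal} left ideal; dually, $\mu^{\circ}$ is a surjective right $M_n^e$-module map onto the simple module (for the right-module structure $x\cdot(a\otimes b):=bxa$ on $M_n$), so $\ker\mu^{\circ}$ is a maximal \emph{right} ideal. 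Since $\mu(m_p)=p(1-p)=0$ for every $p\in P(M_n)$, the left ideal generated by the $m_p$ is contained in $\ker\mu$, and the content is the reverse containment.

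For that, by \Cref{pr:p1p.spn.mat} the left ideal generated by the $m_p$ contains $\spn\{m_p\}=\ker\mu\cap\ker\mu^{\circ}$, so it suffices to prove $M_n^e\cdot\left(\ker\mu\cap\ker\mu^{\circ}\right)=\ker\mu$. Fix an identification $M_n^e\cong\End(W)$ with $W$ the simple module, $\dim_{\bC}W=n^2$; under it $\ker\mu=\{T:Tw_0=0\}$ for the cyclic vector $w_0$ attached to $1_{M_n}$, while $\ker\mu^{\circ}=\{T:\im T\subseteq W_1\}$ for some hyperplane $W_1\subset W$ (every maximal right ideal of $\End(W)$ having this shape). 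Their intersection $\{T:Tw_0=0,\ \im T\subseteq W_1\}$ contains an operator $T_0$ with $\ker T_0=\bC w_0$ and $\im T_0=W_1$ exactly, since both $W/\bC w_0$ and $W_1$ have dimension $n^2-1$; and for any such $T_0$ one has $\End(W)\cdot T_0=\{T:\ker T\supseteq\bC w_0\}=\ker\mu$. Hence $\ker\mu\subseteq M_n^e\cdot\left(\ker\mu\cap\ker\mu^{\circ}\right)$, giving equality; the right-ideal statement follows symmetrically (apply the flip $a\otimes b\mapsto b\otimes a$, which carries the $m_p$ to the $m_{1-p}$, or rerun the argument over $M_n^{\circ}$).

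The only step needing a little care is the bookkeeping of the one-sided module structures — that $\ker\mu$ is a maximal \emph{left} ideal and $\ker\mu^{\circ}$ a maximal \emph{right} ideal, occupying ``complementary'' positions inside $\End(W)$ (vanishing on a line versus mapping into a hyperplane), which is exactly what forces a rank-$(n^2-1)$ operator into their intersection; a single such operator already generates the maximal left ideal $\ker\mu$, whereas no individual $m_p$ does (each $m_p$ acting on $M_n$ by $x\mapsto px(1-p)$ has rank at most $\lfloor n^2/4\rfloor$). Everything else is routine. As an aside, one can bypass \Cref{pr:p1p.spn.mat} entirely: unwinding the $\End(W)$ picture with $W=M_n$, the equality $M_n^e\cdot\{m_p\}=\ker\mu$ amounts to $\bigcap_{p\in P(M_n)}\ker m_p=\bC\cdot 1_{M_n}$, and the vanishing of $px(1-p)$ for every rank-one projection $p=vv^*$ forces every unit vector $v$ to be an eigenvector of $x^*$, whence $x$ is scalar.
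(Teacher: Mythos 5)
Your proof is correct, but it reaches the conclusion by a genuinely different route from the paper's. The paper also begins from \Cref{pr:p1p.spn.mat}, so that the left ideal in question is $M_n^e\cdot\left(\ker\mu\cap\ker\mu^{\circ}\right)$, but then finishes $\U(n)$-equivariantly: that ideal is a $\U(n)$-subrepresentation sandwiched strictly between $\ker\mu\cap\ker\mu^{\circ}$ and $\ker\mu$ (strictly, because some $ap\otimes(1-p)b$ fails to lie in $\ker\mu^{\circ}$), and the quotient $\ker\mu/\left(\ker\mu\cap\ker\mu^{\circ}\right)\cong\fsl_n$ is irreducible, leaving no room for a proper intermediate invariant subspace. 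You instead exploit the Wedderburn picture $M_n^e\cong\End(W)$ with $W=M_n$ the simple module: $\ker\mu$ is the (maximal left ideal) annihilator of the line $\bC\cdot 1_{M_n}$, $\ker\mu^{\circ}$ is the (maximal right ideal) set of operators with range inside a hyperplane, their intersection therefore contains an operator of rank $n^2-1$ killing exactly $\bC\cdot 1_{M_n}$, and a single such operator already generates $\ker\mu$ as a left ideal; the bookkeeping of the two one-sided module structures, which you rightly single out as the delicate point, checks out. What the paper's argument buys is brevity and continuity with the representation-theoretic machinery running through the section; what yours buys is elementariness (pure linear algebra, with no appeal to $\U(n)$-invariance of the ideal) and, via the closing aside computing $\bigcap_{p}\ker m_p=\bC\cdot 1_{M_n}$ directly from rank-one projections, the option of complete independence from \Cref{pr:p1p.spn.mat}, which the paper's proof cannot claim.
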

\begin{proof}
  The two statements are mirror images of each other, so we focus on the first. Plainly, not every element $ap\otimes (1-p)b$ in the left ideal generated by \Cref{eq:spn.mp} belongs to $\ker\mu^{\circ}$, so that left ideal is larger. The conclusion follows from the simplicity of 
  \begin{equation*}
    \ker\mu/\left(\ker\mu\cap\ker\mu^{\circ}\right)
    \quad\cong\quad
    \left(V\otimes \fsl_n\otimes V^*\right)/\fsl_n^{\otimes 2}
    \quad\cong\quad
    \fsl_n
  \end{equation*}
  as a $\U(n)$-representation.
\end{proof}

\pf{th:p1p.id}
\begin{th:p1p.id}
  There is no loss in handling only the left-handed versions of the claims. Note first that the multiplication maps
  \begin{equation*}
    \ell^{\infty}(\cX)\ootimes \ell^{\infty}(\cX)^{\circ}
    \xrightarrow{\quad\mu\quad}
    \ell^{\infty}(\cX)
    \quad\text{and}\quad
    \ell(\cX)\hotimes \ell(\cX)^{\circ}
    \xrightarrow{\quad\mu\quad}
    \ell(\cX)
  \end{equation*}
  pose no continuity issues: in the latter case by the universality property of the projective tensor product \cite[\S 41.3(1)]{k_tvs-2}, and in the former case because \cite[Propositions 8.4 and 8.6]{zbMATH03248454}
  \begin{equation*}
    \ell^{\infty}(\cX)\ootimes \ell^{\infty}(\cX)^{\circ}
    \cong
    \prod_{i,j}^{\cat{W}^*}\cL(X_i)\ootimes \cL(X_j)^{\circ}
  \end{equation*}
  and the usual multiplication $M_n\otimes M_n^{\circ}\to M_n$ is \emph{multiplicatively contractive} in the sense of \cite[post (9.1.4)]{er_os_2000}. We treat the two cases uniformly, the product symbol doing double duty ($W^*$ or Cartesian), as well as the closure operator ($\sigma$-weak and product topology respectively) and the symbol $A$ (which stands for either $\ell^{\infty}$ or $\ell$).

  Denoting by $1_i\in A_i:=\cL(J_i)$ the respective unit, we have 
  \begin{equation*}
    \begin{aligned}
      \overline{\sum_{p\in p\in \ell}}A m_p
      &=
        \prod_{i,j}\sum_{\substack{p_i\in P(A_i)\\p_j\in P(A_j)}} \left(A_i\otimes A_j^{\circ}\right)(p_i\otimes (1-p_j))\\
      &=
        \left(
        \prod_{i\ne j}
        A_i\otimes A_j^{\circ}
        \right)
        \times
        \prod_i \sum_{p\in P(A_i)}\left(A_i\otimes A_i^{\circ}\right)m_p,
    \end{aligned}
  \end{equation*}
  reducing the claim(s) to the matrix case covered by \Cref{cor:id.p1p}. 
\end{th:p1p.id}

We next turn to \Cref{th:p1p.spn}, focusing on the von Neumann version of throughout (the full-product case being entirely parallel). All products of $W^*$-algebras appearing below, consequently, will be the categorically-appropriate ones for $\cat{W}^*$ (as in \Cref{eq:mprod}).

The general case to consider is
\begin{equation*}
  \begin{aligned}
    A
    &\cong
      \prod_{i\in I} A_i
      ,\quad
      A_i\cong M_{n_i}
      ,\quad
      n_i\in \bZ_{\ge 1},\\
    \U:=\U(A)
    &\cong
      \prod_{i\in I} \U_i
      ,\quad
      \U_i:=\U(A_i)\cong \U(n_i).
  \end{aligned}    
\end{equation*}

The following simple remark will allow us to assume the $A$ of \Cref{th:p1p.spn}\Cref{item:th:p1p.spn:prod} finite-dimensional for the most part. 

\begin{lemma}\label{le:fin.dim.enough}
  \Cref{th:p1p.spn}\Cref{item:th:p1p.spn:prod} is equivalent to its counterpart for finite-dimensional $C^*$-algebras. 
\end{lemma}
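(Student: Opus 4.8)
The plan is to observe that one of the two implications is immediate and to put all the work into the other. For the immediate one, a finite-dimensional $C^*$-algebra is a finite product $\prod_{i=1}^{k}M_{n_i}$, i.e.\ an instance of \Cref{eq:prod} with $\cX=\{\bC^{n_1},\dots,\bC^{n_k}\}$ finite; on a finite product the product topology is the norm topology and on finite-dimensional spaces $\hotimes$ is the algebraic tensor product, so \Cref{th:p1p.spn}\Cref{item:th:p1p.spn:prod} specialised to such an $\cX$ \emph{is} its own finite-dimensional counterpart.

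For the converse, write $A=\ell(\cX)=\prod_{i\in I}A_i$ with $A_i\cong M_{n_i}$, and for a finite $F\subseteq I$ put $A_F:=\prod_{i\in F}A_i$ (a finite-dimensional $C^*$-algebra), with coordinate projection $\pi_F\colon A\to A_F$ and summand inclusion $\iota_F\colon A_F\to A$, so $\pi_F\iota_F=\id_{A_F}$. The inclusion $\cat{INQ}(A)_+\subseteq(\ker\mu\cap\ker\mu^{\circ})_+$ is automatic ($\mu$ and $\mu^{\circ}$ kill each $m_p$ and each $\sigma m_p$, and $\ker\mu\cap\ker\mu^{\circ}$ is closed, $\mu,\mu^{\circ}$ being continuous, cf.\ the proof of \Cref{th:p1p.id}), so the content is the reverse inclusion. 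The strategy is: given $\xi\in(\ker\mu\cap\ker\mu^{\circ})_+$, form the truncations $\xi_F:=(\iota_F\hotimes\iota_F)(\pi_F\hotimes\pi_F)(\xi)$ and show (a) $\xi_F\to\xi$ and (b) $\xi_F\in\cat{INQ}(A)_+$ for every $F$; since $\cat{INQ}(A)_+$ is closed this gives $\xi\in\cat{INQ}(A)_+$.

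Part (a) is a routine equicontinuity argument that, pleasantly, needs nothing about how $\hotimes$ behaves under infinite products: $\iota_F\pi_F\colon A\to A$ is the coordinate projection onto the ideal $A_F$, the net $(\iota_F\pi_F)_F$ is equicontinuous (each member is contractive for every defining seminorm $a\mapsto\max_{i\in J}\|a_i\|$, $J\subseteq I$ finite) and converges pointwise to $\id_A$, hence $\bigl((\iota_F\pi_F)\hotimes(\iota_F\pi_F)\bigr)_F$ is equicontinuous on $A\hotimes A^{\circ}$ and converges to the identity on the dense subspace $A\otimes A^{\circ}$, therefore everywhere. For part (b): $\pi_F$ is a $*$-homomorphism, so $\pi_F\hotimes\pi_F$ intertwines $\mu,\mu^{\circ},\sigma$ with their $A_F$-analogues; consequently $(\pi_F\hotimes\pi_F)(\xi)$ lies in $(\ker\mu_{A_F}\cap\ker\mu_{A_F}^{\circ})_+$, which the \emph{assumed} finite-dimensional case identifies with $\cat{INQ}(A_F)_+=\spn\{m_q+\sigma m_q:q\in P(A_F)\}$ (an honest finite span, $A_F$ being finite-dimensional). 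It then remains to check that $\iota_F\hotimes\iota_F$ carries this into $\cat{INQ}(A)_+$, and here is the one step with genuine algebraic content: for $q\in P(A_F)$ one has $(\iota_F\hotimes\iota_F)(m_q+\sigma m_q)=P\otimes Q+Q\otimes P$ with $P:=\iota_F(q)$ and $Q:=\iota_F(1_{A_F})-P$ orthogonal projections of $A$, and the elementary identity
\begin{equation*}
  P\otimes Q+Q\otimes P=\tfrac12\bigl((m_P+\sigma m_P)+(m_Q+\sigma m_Q)-(m_{P+Q}+\sigma m_{P+Q})\bigr),
\end{equation*}
valid for any orthogonal projections $P,Q$ (so that $P+Q$ is again a projection), places it inside $\spn\{m_r+\sigma m_r:r\in P(A)\}\subseteq\cat{INQ}(A)_+$.

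I expect the main obstacle to be bookkeeping rather than ideas: verifying that $\pi_F\hotimes\pi_F$ and $\iota_F\hotimes\iota_F$ are continuous and compatible with the two multiplications and with the flip, and running the equicontinuity-plus-density step cleanly so that (a) genuinely holds in the projective tensor topology. The lone genuinely algebraic input is the displayed projection identity, a one-line expansion; it uses the symmetrisation essentially — there is no unsymmetrised analogue, in keeping with the failure of the antisymmetric counterpart of \Cref{th:p1p.spn} in general.
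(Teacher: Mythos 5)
Your proof is correct, and at bottom it runs on the same reduction as the paper's: equality of closed subspaces of $A\hotimes A\cong\prod_{i,j}A_i\otimes A_j$ is tested on finite coordinate blocks. The difference is one of packaging. The paper simply observes that the product-topology closure of a subspace $W$ is $\bigcap_F(\pi_F\hotimes\pi_F)^{-1}\bigl((\pi_F\hotimes\pi_F)(W)\bigr)$ over finite $F\subseteq I$ (the images being automatically closed, $A_F$ being finite-dimensional), so it only ever needs to push the relevant subspaces \emph{down}: $(\pi_F\hotimes\pi_F)(m_p)=m_{\pi_F(p)}$, every $q\in P(A_F)$ is $\pi_F(p)$ for $p$ the zero-extension, and the kernels likewise project onto their $A_F$-analogues. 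You instead approximate $\xi$ from within by the truncations $\xi_F$ and must therefore lift $\cat{INQ}(A_F)_+$ back \emph{up} along $\iota_F\hotimes\iota_F$; that is what forces your identity $P\otimes Q+Q\otimes P=\tfrac12\bigl((m_P+\sigma m_P)+(m_Q+\sigma m_Q)-(m_{P+Q}+\sigma m_{P+Q})\bigr)$, which is correct (and a nice observation) but avoidable. One small caveat on your closing remark: the symmetrisation is essential only to \emph{your} lifting step, not to the reduction itself --- the paper's projection-only argument is sign-agnostic, and the failure of the antisymmetric statement is already a finite-dimensional phenomenon (cf.\ \Cref{pr:bulk-}), not a failure of the finite-truncation equivalence.
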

\begin{proof}
  This is immediate: the product-topology closure of a linear subspace $W\le A$ is precisely the set of elements $(a_i)\in A=\prod_i A_i$ with
  \begin{equation*}
    (a_j)_{j\in F}\in \pi_F(W)
    ,\quad
    \forall\text{ projection }
    A=\prod_{i\in I}A_i
    \xrightarrowdbl{\quad\pi_F\quad}
    A_F:=\prod_{j\in \text{ finite }F\subseteq I}A_j
  \end{equation*}
  (for arbitrary finite subsets $F\subseteq I$). The same goes for subspaces of
  \begin{equation*}
    A\hotimes A\cong \prod_{i,j\in I}A_i\otimes A_j
  \end{equation*}
  (see the proof of \Cref{th:p1p.id}). 
\end{proof}

We equip $\U$ with its product (hence compact Hausdorff) topology, and take for granted some of the standard background on compact-group representation theory on (always Hausdorff) locally convex topological vector spaces (as covered variously by \cite[Chapters 3 and 4]{hm5}, \cite[Chapter 7]{rob}, and like sources). The phrase \emph{representation} in the sequel, for compact $\G$ and locally convex $E$, refers to \emph{separately continuous} actions
\begin{equation*}
  \G\times E\xrightarrow{\quad\triangleright\quad}E
  \quad
  \left(\text{sometimes }\triangleright:\G \circlearrowright E\right)
\end{equation*}
in the sense of being continuous in each variable when the other is fixed. This often but not always entails \emph{joint} continuity, e.g. \cite[p.VIII.9, Proposition 1]{bourb_int_en_7-9} when $E$ is \emph{barreled} in the sense of \cite[\S 21.2]{k_tvs-1} (cf. \cite[discussion in \S 2, p.13]{rob} and \cite[p.VIII.9, Definition 1]{bourb_int_en_7-9}).

For compact $\G$ we write $\widehat{\G}$ for the set of (isomorphism classes of) \emph{irreducible} representations. These are automatically finite-dimensional \cite[Corollary 7.9]{rob} if assumed jointly-continuous and \emph{quasi-complete} \cite[post \S 18.4(3)]{k_tvs-1}. For a $\G$-representation $\G\circlearrowright E$ on a locally convex space $E$ we denote by $E_{\rho}$, $\rho\in \widehat{\G}$ the \emph{$\rho$-isotypic component} \cite[Definition 4.21]{hm5} of the representation: the largest $\U$-invariant subspace of $E$ decomposing as a sum of copies of $\rho$. These do exist and come packaged with idempotents
\begin{equation*}
  E
  \xrightarrowdbl{\quad P_{\rho}\quad}
  E_{\rho}
  \le
  E
\end{equation*}
as soon as the locally convex topology of $E$ is complete in the appropriate sense (quasi-completeness suffices \cite[Table 3.1 and Theorem 3.36]{hm5}). In order to take full advantage of the $\U$-symmetry of all spaces involved in the sequel, we need the following remark.

\begin{lemma}\label{le:cont.proj}
  Let $\cX$ be a quantum set \Cref{eq:mprod} and set $\ell^{\bullet}:=\ell^{\bullet}(\cX)$ for $\bullet\in \left\{\text{blank},\ \infty\right\}$.
  \begin{enumerate}[(1),wide]
  \item\label{item:le:cont.proj:l} The actions of $\U=\U\left(\ell\right)$ on the tensor powers $\ell^{\hotimes n}$ with the product topology are jointly continuous. 

  \item\label{item:le:cont.proj:linfty} The actions of $\U$ on the tensor powers $\left(\ell^{\infty}\right)^{\ootimes n}$ with the $\sigma$-weak topology are separately continuous.

  \item\label{item:le:cont.proj:cont.proj} In both cases $E\in \left\{\ell^{\hotimes n},\ \left(\ell^{\infty}\right)^{\ootimes n}\right\}$ the idempotents $P_\rho$ onto the $\rho$-isotypic component $E_{\rho}$, $\rho\in\widehat{\U}$ are continuous. 
  \end{enumerate}
\end{lemma}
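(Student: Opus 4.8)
The plan is to treat the three items in the order they are stated, since \ref{item:le:cont.proj:cont.proj} will follow from the continuity established in \ref{item:le:cont.proj:l} and \ref{item:le:cont.proj:linfty} together with the general machinery of compact-group representations. For \ref{item:le:cont.proj:l}, the key point is that $\ell = \prod_i \cL(X_i)$ carries the product topology, so $\ell^{\hotimes n} \cong \prod_{i_1,\dots,i_n} \cL(X_{i_1})\otimes\cdots\otimes\cL(X_{i_n})$ is again a product of the finite-dimensional spaces, and joint continuity of a $\U$-action on a product reduces, componentwise, to joint continuity on each finite-dimensional factor. On each such factor $\cL(X_{i_1})\otimes\cdots\otimes\cL(X_{i_n})$ the conjugation action factors through the compact group $\U_{i_1}\times\cdots\times\U_{i_n}$ acting (smoothly, hence) jointly continuously; precomposing with the continuous projection $\U = \prod_i \U_i \to \U_{i_1}\times\cdots\times\U_{i_n}$ and using that the product topology on $\U$ is the initial topology for these projections gives joint continuity into each factor, hence into the product. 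First I would record this product-decomposition reduction, then invoke smoothness of the finite-dimensional conjugation representation.

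For \ref{item:le:cont.proj:linfty}, separate continuity is genuinely weaker and is what one should expect: fixing $g \in \U$, the map $x \mapsto g \triangleright x$ is a $\sigma$-weakly continuous $*$-automorphism of $(\ell^\infty)^{\ootimes n}$ (normal automorphisms are automatically $\sigma$-weakly continuous, e.g. because they are adjoints of their preduals), and fixing $x$, continuity of $g \mapsto g\triangleright x$ in the product topology on $\U$ can be checked coordinatewise on each matrix-block summand of $(\ell^\infty)^{\ootimes n} \cong \prod_{i_1,\dots,i_n}\cL(X_{i_1})\otimes\cdots\otimes\cL(X_{i_n})$, where it is just continuity of a polynomial map, so the orbit map is continuous into each (finite-dimensional, hence norm = $\sigma$-weak-topologized) block, and thus into the product with its $\sigma$-weak topology. (Joint continuity typically fails here — this is exactly the phenomenon analyzed later in Section \ref{se:jcont.wast} — so I would deliberately not claim more.)

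For \ref{item:le:cont.proj:cont.proj}, both $\ell^{\hotimes n}$ (a product of finite-dimensional spaces, hence complete, in fact a Fréchet-like product of Banach spaces) and $(\ell^\infty)^{\ootimes n}$ (a von Neumann algebra, hence $\sigma$-weakly quasi-complete) satisfy the completeness hypotheses under which the isotypic projections $P_\rho$ exist \cite[Table 3.1 and Theorem 3.36]{hm5}. The continuity of $P_\rho$ I would extract as follows: $P_\rho$ is given by the vector-valued integral $P_\rho x = d_\rho \int_{\U} \overline{\chi_\rho(g)}\, (g\triangleright x)\, \mathrm{d}\mu_\U(g)$ against the character $\chi_\rho$, and this integral is a weak integral that is continuous into $E$ precisely because, in case \ref{item:le:cont.proj:l}, the action is jointly continuous (so $g\mapsto g\triangleright x$ is continuous and the averaging operator is bounded uniformly on bounded sets / equicontinuous families), and in case \ref{item:le:cont.proj:linfty}, one exploits that $(\ell^\infty)^{\ootimes n}$ is again a product over blocks, that $P_\rho$ respects this block decomposition, and that on each finite-dimensional block the action is jointly continuous — so $P_\rho$ is a product of the block-level projections, each of which is a linear endomorphism of a finite-dimensional space, hence continuous, whence $P_\rho$ is continuous for the product topology. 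The main obstacle I anticipate is item \ref{item:le:cont.proj:cont.proj} in the $\sigma$-weak case: one must be careful that separate (not joint) continuity is enough to make $P_\rho$ continuous, and the clean way around this is precisely the reduction to the finite-dimensional blocks of the product $\prod_{i_1,\dots,i_n}\cL(X_{i_1})\otimes\cdots\otimes\cL(X_{i_n})$, where everything is automatic; so I would route the whole argument through that product decomposition rather than trying to work with the abstract integral on the $\sigma$-weakly-topologized algebra directly.
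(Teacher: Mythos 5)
Your reduction of part \Cref{item:le:cont.proj:l} to the individual finite-dimensional blocks of \Cref{eq:l.tens.n.decomp} is a perfectly valid (and arguably more elementary) alternative to the paper's route, which instead deduces joint continuity from separate continuity plus the fact that $\ell$ is barreled \cite[\S 27.1(5)]{k_tvs-1} via \cite[p.VIII.9, Proposition 1]{bourb_int_en_7-9}: a map into a Cartesian product is jointly continuous iff each component is, and each component factors through $\U_{i_1}\times\cdots\times\U_{i_n}$ acting on a finite-dimensional space. The trouble starts where the $\sigma$-weak topology enters, because the $\sigma$-weak topology on $\prod^{\cat{W}^*}_{\mathbf{i}}A_{\mathbf{i}}$ is \emph{not} the product topology of the blocks: it is the weak topology induced by the $\ell^1$-sum predual \Cref{eq:l1.predual}, which is strictly finer than coordinatewise convergence on unbounded sets. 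In part \Cref{item:le:cont.proj:linfty} your inference ``continuous into each block, thus into the product with its $\sigma$-weak topology'' is therefore incomplete as stated; it is rescued only because the orbit $\left\{uxu^*\ :\ u\in \U\right\}$ is norm-bounded by $\|x\|$, so that coordinatewise convergence plus a dominated-convergence argument against the summable tuples of \Cref{eq:l1.predual} does yield $\sigma$-weak convergence. You should say this explicitly.

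The more substantive gap is in part \Cref{item:le:cont.proj:cont.proj} for $E=\left(\ell^{\infty}\right)^{\ootimes n}$. Your conclusion there is that $P_{\rho}$, being a product of continuous block-level projections, ``is continuous for the product topology'' --- but that is not the continuity the lemma asserts, and product-topology continuity of a linear map on a $W^*$-product does not imply $\sigma$-weak continuity (the product topology being coarser than the $\sigma$-weak one, continuity for the former gives only $(E,\sigma\text{-weak})\to(E,\text{product})$ continuity, with the wrong topology on the target). The boundedness trick used for orbit maps is unavailable here because $P_{\rho}$ must be continuous on all of $E$, not merely on bounded subsets. The paper closes this by observing that the block projections $P_{\rho}^{\mathbf{i}}$ have uniformly bounded norms (each is an average of isometries against $d_{\rho}\overline{\chi_{\rho}}$, hence of norm at most $d_{\rho}^2$), so their preadjoints assemble into a bounded endomorphism of the $\ell^1$-sum \Cref{eq:l1.predual}, and $P_{\rho}$ is the adjoint of that endomorphism --- hence weak$^*$-continuous \cite[Theorem 8.10.3]{nb_tvs}. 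Your character-integral formula for $P_{\rho}$ is correct and the existence claims via \cite[Table 3.1 and Theorem 3.36]{hm5} are fine, but the $\sigma$-weak continuity needs this predual argument (or an equivalent), not the blockwise product one.
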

\begin{proof}
  Observe first that the spaces $\ell^{\bullet}$ are indeed quasi-complete in their respective topologies, so the $\ell^{\bullet}$-integration theory discussed in \cite[Proposition 3.30]{hm5} does indeed apply:
  \begin{itemize}[wide]
  \item $\ell$ is even complete by \cite[post \S 15.4(7)]{k_tvs-1}, being a product of finite-dimensional (hence complete) topological vector spaces;
    
  \item while $\ell^{\infty}$, while not $\sigma$-weakly-complete if infinite-dimensional (e.g. by \cite[\S 39.6(7)]{k_tvs-2}, because it is not the \emph{full} algebraic dual of its predual), is nevertheless \emph{quasi}-complete by \cite[\S 39.6(5)]{k_tvs-2}.
  \end{itemize}

  Recall that (as effectively noted in the proof of \Cref{th:p1p.id} for $n=2$) 
  \begin{equation}\label{eq:l.tens.n.decomp}
    \ell^{\hotimes n}
    \cong
    \prod_{\mathbf{i}\in I^n}A_{\mathbf{i}}
    \quad\text{and}\quad
    \left(\ell^{\infty}\right)^{\ootimes n}
    \cong
    \prod^{\cat{W}^*}_{\mathbf{i}\in I^n}A_{\mathbf{i}}
    ,\quad
    A_{\mathbf{i}}:=A_{i_1}\otimes\cdots \otimes A_{i_n}
    \text{ for }
    \mathbf{i}=\left(i_j\right)_j\in I^n.
  \end{equation}
  We address the various claims in turn.

  \begin{enumerate}[label={}, wide]
  \item \textbf{\Cref{item:le:cont.proj:cont.proj}:} In both cases the isotypic projections $P_{\rho}$ are products (Cartesian or $W^*$) of projections on the individual factors of \Cref{eq:l.tens.n.decomp}. Continuity follows:
    \begin{itemize}[wide]
    \item in the Cartesian case categorically ($P_{\rho} being $the product in the category of locally convex spaces of a family of morphisms);
    \item and in the $W^*$ case (by \cite[Theorem 8.10.3]{nb_tvs}, say) because $P_{\rho}$ is the dual of an endomorphism of the predual
      \begin{equation}\label{eq:l1.predual}
        \ell^1\left(A_{\mathbf{i}*}\right)_{\mathbf{i}}
        :=
        \left\{(a_{\mathbf{i}})_{\mathbf{i}}\in \prod_{I^n} A_{\mathbf{i}*}\ :\ \sum_{\mathbf{i}} \|a_{\mathbf{i}}\|<\infty\right\},
      \end{equation}
      (\emph{$\ell^1$-sum} \cite[\S 1.1, post Proposition 7]{hlmsk_fa}), where lower $*$ indicate preduals (respectively identifiable with $A_{\mathbf{i}}$ via the pairing afforded by \Cref{eq:abcd.inner.prod}).
    \end{itemize}

  \item \textbf{\Cref{item:le:cont.proj:l} and \Cref{item:le:cont.proj:linfty}: separate continuity:} Separate continuity is easily checked for the $\U$-actions on
    \begin{itemize}
    \item the algebraic direct sum $\bigoplus_{\mathbf{i}\in I^n}A_{\mathbf{i}*}$ with its locally convex \emph{direct-sum topology} \cite[\S 18.5]{k_tvs-1};

    \item as well as on the $\ell^1$-sum \Cref{eq:l1.predual}
    \end{itemize}
    The \emph{weak duals} \cite[\S 20.2]{k_tvs-1} of these spaces being $\ell$ and $\ell^{\infty}$ respectively, separate continuity follows in both cases from \cite[p.VIII.11, Proposition 3(i)]{bourb_int_en_7-9}. This disposes of \Cref{item:le:cont.proj:linfty}. 
    
  \item \textbf{\Cref{item:le:cont.proj:l}:} As for the stronger \emph{joint} continuity claim in \Cref{item:le:cont.proj:l}, it follows from the already-cited \cite[p.VIII.9, Proposition 1]{bourb_int_en_7-9} and the fact \cite[\S 27.1(5)]{k_tvs-1} that $\ell$ is barreled in the sense of \cite[\S 21.2]{k_tvs-1}.
  \end{enumerate}
\end{proof}

\begin{remark}\label{re:linfty.not.joint.cont}
  The statement of \Cref{le:cont.proj}\Cref{item:le:cont.proj:linfty} can certainly \emph{not} (generally) be strengthened to joint continuity for infinite-dimensional hereditarily-atomic von Neumann algebras; in fact, those quantum sets for which that strengthening does obtain can be classified, per \Cref{def:qset.virt.cls} and \Cref{pr:cont.act.virt.cls} below.
\end{remark}

\Cref{re:linfty.not.joint.cont} notwithstanding, the continuity of the component projections ensured by \Cref{le:cont.proj}\Cref{item:le:cont.proj:cont.proj} does afford us most of the Peter-Weyl theory summarized in \cite[Theorems 3.51 and 4.22]{hm5}. In particular, for subspaces $W,W'\le \ell^{\bullet}$ closed in the appropriate ($\sigma$-weak or product) topology, we have
\begin{equation*}
  W\le W'
  \xLeftrightarrow{\quad}
  W_{\rho}\le W'_{\rho}
  ,\quad
  \forall
  \rho\in\widehat{\U}
  \quad
  \left(\text{similarly for arbitrary }\U_S:=\prod_{s\in S}\U_s,\ S\subseteq I\right).
\end{equation*}
The appeals to this general observation will be frequent but mostly tacit.

Whenever $A$ happens to be finite-dimensional, we assume fixed an (automatically $\U$-invariant) inner products $\braket{-\mid -}=\braket{-\mid -}_{\tau}$ on $A\otimes A$ defined as in \Cref{eq:abcd.inner.prod} for a faithful tracial state
\begin{equation}\label{eq:ftrc}
  A\xrightarrow{\quad \tau=\sum_i \alpha_i\tau_i\quad}\bC
  ,\quad
  \begin{aligned}
    \tau_i
    &:=\text{normalized trace on $A_i$}\\
    \alpha_i
    &> 0
      ,\quad
      \sum_i\alpha_i=1.     
  \end{aligned}
\end{equation}
The ``cross'' summands $A_i\otimes A_j$, $i\ne j$ are not addressed by \Cref{pr:p1p.spn.mat}; we focus on those first, noting the $\U$-invariant decomposition
\begin{equation}\label{eq:aiaj.dec}
  A_i\otimes A_j
  \cong
  \bC
  \oplus
  \left(
    \fg_{i}\otimes \bC
    \oplus
    \bC\otimes \fg_{j}
  \right)
  \oplus
  \fg_{i}\otimes \fg_{j}
  ,\quad
  \fg_{\bullet}:=\fsl_{n_{\bullet}}\le A_{\bullet}.
\end{equation}

The three types of summands will be handled separately. In addressing the symmetric portion of \Cref{th:p1p.spn}, $\pm$ subscripts denote the $\pm 1$-eigenspaces of
\begin{equation*}
  A\otimes A
  \ni
  a\otimes b
  \xmapsto{\quad\sigma\quad}
  b\otimes a
  \in
  A\otimes A
\end{equation*}
operating on whatever the space in question is (as in the Introduction); more generally, $(V\otimes W)_{\pm}$ stands for the (anti)symmetric summand of the smallest $\sigma$-invariant space generated by $V\otimes W$:
\begin{equation}\label{eq:vwpm}
  \left(V\otimes W\right)_{\pm}
  :=
  \begin{cases}
    \left(V\otimes V\right)_{\pm}
    &\text{if }V=W\\
    \left(V\otimes W\oplus W\otimes V\right)_{\pm}
    &\text{otherwise}.
  \end{cases}
\end{equation}
We will also write
\begin{equation}\label{eq:abpm}
  \left(a\otimes b\right)^{\pm}
  :=
  a\otimes b\pm b\otimes a.
\end{equation}

\begin{lemma}\label{le:cnst}
  Under the hypotheses of \Cref{th:p1p.spn} the symmetric scalar summand
  \begin{equation*}
    \bC
    =
    \bC\left(1_i\otimes 1_j\right)^+
    \le
    \left(A_i\otimes A_j\oplus A_j\otimes A_i\right)_+
  \end{equation*}
  is contained in the linear span of $m_p^+$, $p\in P(A)$ (so in particular also in \Cref{eq:spn.mp}). 
\end{lemma}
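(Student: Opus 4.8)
The plan is to exhibit $(1_i\otimes 1_j)^+:=(1_i\otimes 1_j)+(1_j\otimes 1_i)$ explicitly as a short linear combination of elements $m_p^+:=m_p+\sigma m_p$ attached to \emph{central} projections $p$ of $A$, so that no closure operation is needed. First I would observe that only the case $i\neq j$ is genuinely at stake: applying $\mu$ to $m_p^+=p\otimes(1-p)+(1-p)\otimes p$ gives $p(1-p)+(1-p)p=0$, so $\spn\{m_p^+:p\in P(A)\}\subseteq\ker\mu$, whereas $\mu(1_i\otimes 1_i)=1_i\neq 0$; and in any case it is the cross summand \Cref{eq:aiaj.dec} (with $i\neq j$) that the present lemma is designed to supply.

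So assume $i\neq j$. For a subset $S\subseteq I$ let $e_S:=\sum_{k\in S}1_k\in P(A)$ be the central projection cutting out the $S$-coordinates --- a bona fide projection in $\prod^{\cat{W}^*}A_k$ and in $\prod A_k$ alike --- so that $1-e_S=e_{S^{c}}$ and, under the block decomposition $A\otimes A\cong\prod_{k,l}A_k\otimes A_l$ (cf. \Cref{eq:l.tens.n.decomp}),
\begin{equation*}
  m_{e_S}=e_S\otimes(1-e_S)=\sum_{k\in S,\ l\notin S}1_k\otimes 1_l,
\end{equation*}
the term $1_k\otimes 1_l$ sitting in the $(k,l)$-block. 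I would then invoke $\sigma m_p=(1-p)\otimes p=m_{1-p}$ twice: once to note that every $m_p^+=m_p+m_{1-p}$ already lies in $\spn\{m_q:q\in P(A)\}$ --- which makes the parenthetical claim about \Cref{eq:spn.mp} automatic --- and once to record $m_{e_S}^+=\sum_{(k,l):\,|S\cap\{k,l\}|=1}1_k\otimes 1_l$ (ordered pairs, term in the $(k,l)$-block). The crux is then the identity
\begin{equation*}
  m_{e_{\{i\}}}^++m_{e_{\{j\}}}^+-m_{e_{\{i,j\}}}^+
  =2\bigl(1_i\otimes 1_j+1_j\otimes 1_i\bigr)
  =2\,(1_i\otimes 1_j)^+ ,
\end{equation*}
which I would verify block by block: the coefficient of the $(k,l)$-block on the left is $[\,|\{i\}\cap\{k,l\}|=1\,]+[\,|\{j\}\cap\{k,l\}|=1\,]-[\,|\{i,j\}\cap\{k,l\}|=1\,]$, equal to $2$ when $\{k,l\}=\{i,j\}$ and to $0$ in each of the remaining cases. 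Dividing by $2$ places $(1_i\otimes 1_j)^+$ inside $\spn\{m_p^+:p\in P(A)\}$, which is the assertion.

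I do not anticipate a genuine obstacle here; the only points needing care are the bookkeeping for the flip $\sigma$ in the presence of the block decomposition and the small finite case-check behind the displayed identity --- which is just the observation that, in the complete graph on the index set $I$, the singleton edge $\{i,j\}$ equals $\tfrac12\bigl(\delta(\{i\})+\delta(\{j\})-\delta(\{i,j\})\bigr)$ in the cut space. Nothing in the argument is sensitive to whether $I$ is finite or to whether $A$ is the $W^*$ product \Cref{eq:mprod} or the full product \Cref{eq:prod}.
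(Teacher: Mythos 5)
Your proof is correct, and it takes a genuinely more direct route than the paper's. Both arguments run on the same raw material --- the central projections $e_S=1_S$ supported on subsets $S\subseteq I$, whose elements $m_{e_S}$ are purely off-diagonal in the block decomposition $A\otimes A\cong\prod_{k,l}A_k\otimes A_l$ --- but they extract $(1_i\otimes 1_j)^+$ from them differently. The paper first restricts to finite-dimensional $A$ and proves the equality of spans \Cref{eq:1ss.1ij} by an inner-product duality argument (the elements $\cat{IO}_{i\mid j}$ pair nontrivially with exactly one $1^+_{i'\mid j'}$ each), and then transfers the resulting identity to general $A$ via a Boolean-algebra embedding $2^{\{\alpha,\beta,\gamma\}}\hookrightarrow 2^I$. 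You instead exhibit the explicit three-term identity $m^+_{e_{\{i\}}}+m^+_{e_{\{j\}}}-m^+_{e_{\{i,j\}}}=2\left(1_i\otimes 1_j\right)^+$, verified block by block; this is precisely the linear combination that the paper's two-step argument implicitly produces (its triple product $\bC 1_i\oplus\bC 1_j\oplus\bC 1_{\widehat{\{i,j\}}}$ corresponds exactly to your three subsets $\{i\}$, $\{j\}$, $\{i,j\}$). What your version buys is uniformity: no finite-dimensionality reduction, no inner products or orthogonality, no closure operation, and manifest indifference to whether the product is Cartesian or $W^*$. Two minor remarks. First, in your formula $m^+_{e_S}=\sum_{(k,l):\,|S\cap\{k,l\}|=1}1_k\otimes 1_l$ the diagonal pairs $k=l\in S$ satisfy the stated condition but do not actually occur in $m_{e_S}+\sigma m_{e_S}$ (their coefficient there is $0$); since these spurious terms cancel in your final combination in any case, nothing is affected, but the formula should be restricted to $k\ne l$. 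Second, your opening observation that only $i\ne j$ can be at stake --- because $\mu\left((1_i\otimes 1_i)^+\right)\ne 0$ while every $m_p^+$ lies in $\ker\mu$ --- is correct and consistent with the paper's framing of the lemma as supplying the cross summands of \Cref{eq:aiaj.dec}.
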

\begin{proof}
  \begin{enumerate}[(I),wide]
  \item\label{item:le:cnst:pf.fd} \textbf{: Finite-dimensional $A$.} Write
    \begin{equation}\label{eq:as}
      A_S:=\prod_{j\in S}A_j
      ,\quad
      \widehat{\bullet}
      :=
      \text{complement of $\bullet$ in $I$}.
    \end{equation}
    Choosing for $p\in P(A)$ the \emph{support projections} \cite[Proposition II.3.12 and Definition III.2.8]{tak1} $1_S\in A_S$ of $A_S$ for varying $S$, it follows that the elements $1_S\otimes 1_{\widehat{S}}$ all belong to \Cref{eq:spn.mp}.

    For subsets $S,T\subseteq I$, write
    \begin{equation*}
      1_{S\mid T}^{\pm}
      :=
      \left(1_S\otimes 1_T\right)^{\pm}
      \quad
      \left(\text{cf. \Cref{eq:abpm}}\right),
    \end{equation*}
    with singletons $\{i\}$ abbreviated to $i$. I claim that 
    \begin{equation}\label{eq:1ss.1ij}
      \spn \left\{1_{S\mid \widehat{S}}^{+}\ :\ \emptyset\ne S\subsetneq I\right\}
      =
      \spn \left\{1_{i\mid j}^{+}\ :\ i\ne j\in I\right\}.
    \end{equation}  
    The inclusion `$\subseteq$' is obvious; for its opposite, for $i\ne j\in I$ set
    \begin{equation*}
      \cat{IO}_{i\mid j}
      \ 
      \left(\text{for in/out}\right)
      \ 
      :=
      \sum_{
        \substack{
          S\subseteq I\\
          |S\cap \left\{i,j\right\}|=1
        }
      }1_{S\mid \widehat{S}}^{+}
      -
      \sum_{
        \substack{
          S\subseteq I\\
          |S\cap \left\{i,j\right\}|\in \{0,2\}
        }
      }1_{S\mid \widehat{S}}^{+}.
    \end{equation*}
    We have   
    \begin{equation*}
      \Braket{\cat{IO}_{i\mid j}\ \big|\ 1_{i'\mid j'}^{+}}
      =
      \delta_{i,i'}\cdot \delta_{j,j'}\cdot
      \left(\text{positive factor}\right),
    \end{equation*}
    so no non-trivial linear combination can be $\Braket{-\mid-}$-orthogonal to all $1_{S\mid \widehat{S}}^+$; consequently, \Cref{eq:1ss.1ij} holds. 
    
  \item\label{item:le:cnst:pf.gen} \textbf{: General case.} Simply observe that \Cref{eq:1ss.1ij} for triple products $\prod_{i=1}^3 M_{n_i}$ delivers its infinite-product counterpart (whether Cartesian or von Neumann) upon identifying the center $\bC^3\le \prod_{k=1}^3 M_{n_k}$ with the central subalgebra $\bC 1_i\oplus \bC 1_j\oplus \bC 1_{\widehat{\{i,j\}}}$ of $A$. Indeed, denoting the three minimal central projections of $\bC^3$ by $1_{\alpha,\beta,\gamma}$ and casting the aforementioned identification as a Boolean-algebra isomorphism
    \begin{equation*}
      2^{\{\alpha,\beta,\gamma\}}
      \xrightarrow[\quad\cong\quad]{\psi}
      \text{Boolean subalgebra of $2^I$ generated by }
      i,\ j,\ \widehat{\left\{i,j\right\}}
    \end{equation*}
    sending $\alpha$, $\beta$ and $\gamma$ to $i$, $j$ and $\widehat{\left\{i,j\right\}}$ respectively, we have
    \begin{equation*}
      1^+_{\alpha\mid \beta}
      =
      \sum_{F\subseteq \left\{\alpha,\beta,\gamma\right\}} c_F 1^+_{F\mid \widehat{F}}
      \quad
      \xRightarrow{\quad}
      \quad
      1^+_{i\mid j}
      =
      1^+_{\psi \alpha\mid \psi \beta}
      =
      \sum_{F\subseteq \left\{\alpha,\beta,\gamma\right\}} c_F 1^+_{\psi F\mid \psi\widehat{F}}.
    \end{equation*}
    This completes the proof.
  \end{enumerate}
\end{proof}


We now consider the ``bulk'' summand of \Cref{eq:aiaj.dec}, again in its symmetrized version. 

\begin{proposition}\label{pr:only.symm}
  Under the hypotheses of \Cref{th:p1p.spn}, for $i\ne j$ the intersection
  \begin{equation*}
    \text{\Cref{eq:spn.mp}}
    \cap
    \left(
      \left(
        \fg_{i}\otimes \fg_{j}
      \right)_+
      \le
      \left(
        A_i\otimes A_j
      \right)_+
    \right)
    \quad
    \left(\text{per \Cref{eq:vwpm}}\right)
  \end{equation*}
  consists precisely of the symmetric tensors
  \begin{equation*}
    \spn\left\{x+\sigma x\ :\ x\in \fg_{i}\otimes \fg_{j},\ \sigma:=\text{tensorand interchange}\right\}.
  \end{equation*}
\end{proposition}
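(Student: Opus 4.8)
The plan is to isolate the irreducible $\U$-representation carried by the summand in question, compute the corresponding isotypic component of each generator $m_p$, and read off the statement from that single computation. Write $\fg_\bullet=\fsl_{n_\bullet}$ throughout; the claim is vacuous unless $n_i,n_j\ge 2$, which I henceforth assume. Put $S:=\fg_i\otimes\fg_j\oplus\fg_j\otimes\fg_i$, a finite-dimensional $\U$-invariant subspace of $A\otimes A$, and note that $S_+:=\left(\fg_i\otimes\fg_j\right)_+$ (in the sense of \Cref{eq:vwpm}) is exactly the space $\spn\{x+\sigma x:x\in\fg_i\otimes\fg_j\}$ the proposition refers to.

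First I would do the representation-theoretic bookkeeping. Since $n_i,n_j\ge 2$, the adjoint actions of $\U_i$ on $\fg_i$ and of $\U_j$ on $\fg_j$ are irreducible, so both $\fg_i\otimes\fg_j$ (inside the block $A_i\otimes A_j$) and $\fg_j\otimes\fg_i$ (inside $A_j\otimes A_i$) are isomorphic, as representations of $\U=\prod_k\U_k$, to one and the same irreducible external tensor product $\rho$, on which $\U_k$ acts trivially for $k\notin\{i,j\}$. Combining this with the block decompositions \Cref{eq:l.tens.n.decomp} of $A\otimes A$ and \Cref{eq:aiaj.dec} of each $A_k\otimes A_l$, one checks that $\rho$ occurs in no block other than $A_i\otimes A_j$ and $A_j\otimes A_i$, with multiplicity one in each; hence $\left(A\otimes A\right)_\rho=S$, and the isotypic idempotent $P_\rho$ — continuous by \Cref{le:cont.proj}\Cref{item:le:cont.proj:cont.proj} — is precisely the projection of $A\otimes A$ onto $S$.

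The heart of the argument is the computation of $P_\rho(m_p)$. For $p=(p_k)_k\in P(A)$ write $p_k^0:=p_k-\tau_k(p_k)1_k\in\fg_k$ for the traceless part. The $(i,j)$- and $(j,i)$-block components of $m_p=p\otimes(1-p)$ are $p_i\otimes(1_j-p_j)$ and $p_j\otimes(1_i-p_i)$; expanding $1_\bullet-p_\bullet=\tau_\bullet(1_\bullet-p_\bullet)1_\bullet-p_\bullet^0$ and retaining only the part traceless in both tensor slots gives projections $-p_i^0\otimes p_j^0$ and $-p_j^0\otimes p_i^0$ onto $\fg_i\otimes\fg_j$ and $\fg_j\otimes\fg_i$, so $P_\rho(m_p)=-\left(p_i^0\otimes p_j^0\right)^+\in S_+$. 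Because $\cat{INQ}(A)$ is $\U$-invariant and closed, $P_\rho$ carries it into itself; since moreover $S$ is finite-dimensional, so that $P_\rho\bigl(\cat{INQ}(A)\bigr)$ is automatically closed, this yields
\[
  \cat{INQ}(A)\cap S=P_\rho\bigl(\cat{INQ}(A)\bigr)=\spn\left\{\left(p_i^0\otimes p_j^0\right)^+:p\in P(A)\right\}.
\]

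It remains to identify the last span with $S_+$. The traceless parts $q-\tfrac1n 1$ of rank-one projections $q\in M_n$ span $\fsl_n$ for $n\ge 2$, so, letting $p$ range over projections of $A$ with $p_i,p_j$ rank-one (and $p_k=0$ otherwise), the tensors $p_i^0\otimes p_j^0$ run over a spanning set of $\fg_i\otimes\fg_j$; hence their symmetrizations $\left(p_i^0\otimes p_j^0\right)^+$ span $S_+=\spn\{x+\sigma x:x\in\fg_i\otimes\fg_j\}$, giving both inclusions at once. The only place genuine infinite-dimensional input is needed — rather than any hard calculation — is the assertion that $P_\rho$ is a continuous idempotent carrying the closed $\U$-invariant subspace $\cat{INQ}(A)$ into itself with image exactly $S$, so that the matrix-level bookkeeping above applies verbatim also to the $W^*$-product case, where the $\U$-action is only separately continuous; this is supplied by \Cref{le:cont.proj} together with the Peter--Weyl remarks preceding the statement.
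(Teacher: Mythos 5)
Your proposal is correct and takes essentially the same approach as the paper: both isolate the multiplicity-two isotypic component $\fg_i\otimes\fg_j\oplus\fg_j\otimes\fg_i$, observe that the component of each $m_p$ there is the symmetric tensor $-\left(p_i^0\otimes p_j^0\right)^+$, and conclude that these exhaust the symmetric copy while missing the antisymmetric one. The paper phrases your projection computation dually, as the inner products $\Braket{m_p\mid \left(a_i\otimes a_j\right)^{\pm}}$, and closes via irreducibility of the symmetric copy where you instead exhibit a spanning set through rank-one projections --- a cosmetic difference.
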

\begin{proof}
  $\fg_{i}\otimes \fg_{j}$ is an irreducible representation of the quotient $\U\xrightarrowdbl{} \U_i\times \U_j$, appearing in $A\otimes A$ with multiplicity 2 (in $A_i\otimes A_j$ and $A_j\otimes A_i$). For that reason, we may as well assume $A$ finite-dimensional (the factors $A_k$, $k\ne i,j$ are irrelevant to the discussion).
  
  The symmetric and antisymmetric copies are generated as $\U_i\times \U_j$-representations by
  \begin{equation*}
    \left(a_i\otimes a_j\right)^{\pm}
    ,\quad
    0\ne a_{\bullet}\in \fg_{{\bullet}}
    \quad
    \left(\text{in the notation of \Cref{eq:abpm}}\right)
  \end{equation*}
  respectively, and it suffices to observe, writing $p_i\in P(A_i)$ for the respective component of $p\in P(A)$, that
  \begin{equation*}
    \begin{aligned}
      \Braket{m_p\ |\ \left(a_i\otimes a_j\right)^{\pm}}
      &=
        \tau(p_i a_i)\tau((1-p_j)a_j)
        \pm
        \tau((1-p_i) a_i)\tau(p_j a_j)\\
      &\xlongequal{\quad \tau(a_{\bullet})=0\quad}
        -
        \left(
        \tau(p_i a_i)\tau(p_ja_j)
        \pm
        \tau(p_i a_i)\tau(p_ja_j)
        \right);
    \end{aligned}    
  \end{equation*}
  this vanishes for \emph{all} choices of $p\in P(A)$ precisely when the sign is `$-$'.
\end{proof}

\newtheorem{item:th:p1p.spn:sym}{Proof of \Cref{th:p1p.spn}}
\begin{item:th:p1p.spn:sym}
  The inclusion
  \begin{equation*}
    \cat{INQ}(A)\le \ker\mu\bigcap\ker\mu^{\circ}
  \end{equation*}
  being obvious, we have to prove the leftmost inclusion in 
  \begin{equation*}
    \begin{aligned}
      \left(
      \ker\mu\bigcap\ker\mu^{\circ}
      \right)_+
      &\le
        {\spn}\left\{m_p+\sigma m_p\ |\ p\in P(A)\right\}\\
      &=
        {\spn}\left\{m_p\ |\ p\in P(A)\right\}_+
        \le
        {\spn}\left\{m_p\ |\ p\in P(A)\right\}.
    \end{aligned}
  \end{equation*}
  There are a few stages to the argument.

  \begin{enumerate}[(I),wide]
  \item\label{item:pr:p1p.spn.sym:pf.red} \textbf{: Reducing the problem to
      \begin{equation}\label{eq:aiajajai}
        \left(A_i\otimes A_j\oplus A_j\otimes A_i\right)_+
        \le
        {\spn}\left\{m_p+\sigma m_p\ |\ p\in P(A)\right\}
      \end{equation}
      for $i\ne j$.} Assume all such inclusions are in place (for arbitrary $i\ne j$) and denote the $A_i$-component of a projection $p\in P(A)$ by $p_i$. Fix $i$ and consider $p\in P(A)$ with $p_j=0$ for all $j\ne i$ (or: $p=p_i\in P(A_i)\subset P(A)$), so that
    \begin{equation*}
      p\otimes (1-p) = p_i\otimes (1-p_i) + p\otimes 1_{\widehat{i}}
      \in
      \left(A_i\otimes A_i\right)
      \oplus
      \left(A_i\otimes A_{\widehat{i}}\right)
    \end{equation*}
    in the notation of \Cref{eq:as}. Since we are assuming that
    \begin{equation*}
      \left(p\otimes 1_i\right)^+
      \in
      {\spn}\left\{m_p+\sigma m_p\ |\ p\in P(A)\right\},
    \end{equation*}
    this would prove that
    \begin{equation*}
      \bigoplus_i
      {\spn}\left\{m_p+\sigma m_p\ |\ p\in A_i\right\}
      \le
      \bigoplus_i \left(A_i\otimes A_i\right)_+
    \end{equation*}
    is contained in the right-hand side of \Cref{eq:aiajajai}; the conclusion would then follow from \Cref{pr:p1p.spn.mat}.
    
  \item\label{item:pr:p1p.spn.sym:sc.blk} \textbf{: \Cref{eq:aiajajai}: scalars and bulk.} That is, recalling the decompositions \Cref{eq:aiaj.dec}, the claim here is that
    \begin{equation*}
      \left(\bC\le \left(A_i\otimes A_j\oplus A_j\otimes A_i\right)_+\right)
      \ \text{and}\ 
      \left(\fg_{i}\otimes \fg_{j}\oplus \fg_{j}\otimes \fg_{i}\right)_+
      \quad\le\quad
      \text{right side of \Cref{eq:aiajajai}}.
    \end{equation*}
    This follows from \Cref{le:cnst} and \Cref{pr:only.symm}.    

    
  \item\label{item:pr:p1p.spn.sym:rest.fin} \textbf{: The rest of $\left(A_i\otimes A_j\oplus A_j\otimes A_i\right)_+$; $A$ finite-dimensional.} The present portion of the proof proposes to show that
    \begin{equation}\label{eq:symsln}
      \spn\left\{\left(a\otimes 1_j\right)^+\ :\ a\in \fg_{i},\ 1_j\text{ unit of }A_j,\ j\ne i\right\}
    \end{equation}
    is contained in \Cref{eq:spn.mp}. For $S\subseteq I$, consider projections $p=p_S\in P(A_S)\subseteq P(A)$. We have
    \begin{equation*}
      p\otimes (1-p)
      =
      p_S\otimes (1_S-p_S)+p_S\otimes 1_{\widehat{S}}
      \in
      \left(A_S\otimes A_S\right)
      \oplus
      \left(A_S\otimes A_{\widehat{S}}\right).
    \end{equation*}
    Similarly, if $1-p=1_S-p_S \in P(A_S)\subseteq P(A)$ instead, the element
    \begin{equation*}
      p_S\otimes (1_S-p_S)+1_{\widehat{S}}\otimes (1_S-p_S)
      \in
      \left(A_S\otimes A_S\right)
      \oplus
      \left(A_{\widehat{S}}\otimes A_{S}\right)
    \end{equation*}
    belongs to the target space. Their difference
    \begin{equation}\label{eq:ps1ps}
      \delta_{p_S}
      :=
      p_S\otimes 1_{\widehat{S}}
      -
      1_{\widehat{S}}\otimes (1_S-p_S)
      \in
      \left(A_S\otimes A_{\widehat{S}}\right)\oplus \left(A_{\widehat{S}}\otimes A_{S}\right)
    \end{equation}
    does too, and I claim that the span
    \begin{equation*}
      \spn\left\{\delta_{p_S}\text{ for varying $S$ and $p_S$}\right\}
      \le
      A\oplus A
      \cong
      \left(A\otimes \bC\right)\oplus \left(\bC\otimes A\right)
      \le
      A\otimes A
    \end{equation*}
    contains \Cref{eq:symsln}. This amounts to showing that for $0\ne a\in \fg_{i}\le A_i$ fixed throughout, no non-zero element 
    \begin{equation}\label{eq:syma}
      \sum_{j\ne i} c_j \left(a\otimes 1_j\right)^+
      ,\quad
      c_j\in \bC\text{ not all }0
    \end{equation}
    is $\braket{-\mid -}$-orthogonal to all $\delta_{p_S}$. To that end, we will set $S=\widehat{j}$ in \Cref{eq:ps1ps} for fixed $j\ne i$. Observe that for any $p\in P(A_{\widehat{j}})$ with $\tau(p_i a)\ne 0$ we have
    \begin{equation}\label{eq:delp.non.orth}
      \Braket{\delta_{p} \mid \left(a\otimes 1_j\right)^+}
      \xlongequal[\tau(a)=0]{\text{\Cref{eq:ftrc}, \Cref{eq:ps1ps}}}
      2\alpha_j \tau(p_i a)
      \ne
      0
      \quad\text{and}\quad
      \begin{aligned}            
        \braket{\delta_{p} \mid A_i\otimes \bC_{j'}}
        &=0\\
        \braket{\delta_{p} \mid \bC_{j'} \otimes A_i}
        &=0\\ 
      \end{aligned}
      ,\quad\forall j'\ne i,j
    \end{equation}
    This suffices to conclude: a non-zero element \Cref{eq:syma} will have a non-zero component in at least one $A_i\otimes \bC_j\oplus \bC_j\otimes A_i$, and that component will ensure non-orthogonality to some $\delta_p$ by \Cref{eq:delp.non.orth}.

    Dwelling on the finite-dimensional case for a while longer, the present argument in fact proves slightly more than has been claimed.
    
  \item\label{item:pr:p1p.spn.sym:rest.fin.bis} \textbf{: For finite-dimensional $A$ we have
      \begin{equation}\label{eq:a1j.dps}
        \left(\forall i\ne j\right)
        \left(\forall a\in \fg_i\le A_i\right)
        \quad:\quad
        \left(a\otimes 1_j\right)^+
        \in
        \spn\left\{\delta_{p_S}\ :\ p_{j'} = 1_{j'}\text{ or }0,\ \forall j'\ne i\right\}.
      \end{equation}
    } Step \Cref{item:pr:p1p.spn.sym:rest.fin} establishes some relation
    \begin{equation}\label{eq:a1j.lcomb}
      \left(a\otimes 1_j\right)^+
      =
      \sum_{\substack{S\subseteq I\\p_S\in P(A_S)}}c_S\delta_{p_S}
    \end{equation}
    which we can simply average against the Haar measure of $\U_{\widehat{i}}:=\prod_{s\ne i}\U_s$. In matrix algebras we have
    \begin{equation*}
      \int_{\U(n)}gpg^{-1}\ \mathrm{d}\mu(g)
      \in
      \bC\le M_n
      ,\quad
      \forall p\in P(M_n),
    \end{equation*}
    so said averaging will leave the left-hand side of \Cref{eq:a1j.lcomb} invariant while transforming the summands of the right-hand side into scalar multiples of the types of $\delta_{P_S}$ \Cref{eq:a1j.dps} refers to. 
    
  \item \textbf{: \Cref{eq:symsln} $\subseteq$ \Cref{eq:spn.mp} in general.} Deducing the present claim from \Cref{item:pr:p1p.spn.sym:rest.fin.bis} is very similar in spirit to the passage from \Cref{item:le:cnst:pf.fd} to \Cref{item:le:cnst:pf.gen} in the proof of \Cref{le:cnst}. For a \emph{finite}-dimensional
    \begin{equation*}
      A'=A'_{\alpha}\times A'_{\beta}\times \cdots
      ,\quad
      A'_{\bullet}\text{ matrix algebras}
      ,\quad
      A'_{\alpha}\cong A_i
    \end{equation*}
    and write, per step \Cref{item:pr:p1p.spn.sym:rest.fin.bis},
    \begin{equation*}
      \left(a\otimes 1_{\beta}\right)^+
      =
      \sum_{F\in 2^{\left\{\alpha,\beta\cdots\right\}}}
      c_F\delta_{p_F}
      \quad\text{with}\quad
      \begin{aligned}
        p_F&\in P(A'_F)\\
        p_{\gamma}&\in \{0,1\}\subset A'_{\gamma}\text{ for }\gamma\ne \alpha. 
      \end{aligned}
    \end{equation*}
    Now consider a Boolean-algebra embedding
    \begin{equation*}
      2^{\alpha,\beta\cdots}
      \lhook\joinrel\xrightarrow{\quad\psi\quad}
      2^I
      ,\quad
      \psi\alpha=i\quad\text{and}\quad \psi\beta=j,
    \end{equation*}
    whence
    \begin{equation*}
      \left(a\otimes 1_{j}\right)^+
      =
      \left(a\otimes 1_{\psi\beta}\right)^+
      =
      \sum_{F\in 2^{\left\{\alpha,\beta\cdots\right\}}}
      c_F\delta_{p_{\psi F}}
      \quad
      \left(\in\text{ right-hand side of \Cref{eq:a1j.dps}}\right),
    \end{equation*}
    where $p_{\psi F}\in P(A)$ is the projection whose $k^{th}$ component $\left(p_{\psi F}\right)_k\in A_k$ is
    \begin{itemize}[wide]
    \item $p_i\in A_i\cong A'_{\alpha}$ if $\alpha\in F$ and $k=i=\psi \alpha$; 

    \item 0 if $k\not\in \psi F$ or $k\in \psi \gamma$ for $\gamma\in F$ with $p_{\gamma}=0$;

    \item and 1 if $k\in \psi \gamma$ for $\gamma\in F$ with $p_{\gamma}=1$.  \qedhere
    \end{itemize}
  \end{enumerate}
\end{item:th:p1p.spn:sym}


The antisymmetric counterpart to \Cref{th:p1p.spn} certainly does \emph{not} hold, by contrast to the single-matrix-factor picture of \Cref{pr:p1p.spn.mat}. In fact, ``most'' of $\left(\ker\mu\bigcap\ker\mu^{\circ}\right)_-$ is absent from $\cat{INQ}(A)$.

\begin{proposition}\label{pr:bulk-}
  Let $A$ be either the full or $W^*$-algebra $\ell^{\bullet}(\cX)$ attached to a quantum set $\cX=\left\{X_i\right\}$ and set
  \begin{equation*}
    \fg_i:=\text{traceless elements of }A_i:=\cL(X_i)
    \cong
    \fsl_{\dim X_i}.
  \end{equation*}
  We then have 
  \begin{equation*}
    \cat{INQ}(A)
    \bigcap
    \prod^{\bullet}_{i,j}\left(\fg_i\otimes \fg_j\right)_-
    =
    \left(
      \prod^{\bullet}_{i}\left(\fg_i\otimes \fg_i\right)_-
    \right)
    \bigcap
    \left(\ker\mu\bigcap\ker\mu^{\circ}\right)_-.
  \end{equation*}
\end{proposition}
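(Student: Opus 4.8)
The plan is to show the two inclusions separately, the nontrivial one being that an antisymmetric element of $\cat{INQ}(A)$ lying in the ``bulk'' product $\prod^{\bullet}_{i,j}(\fg_i\otimes\fg_j)_-$ must in fact be supported on the diagonal indices $i=j$. As usual (via \Cref{le:fin.dim.enough} and the Peter--Weyl reduction recorded after \Cref{le:cont.proj}) it suffices to argue with $A$ finite-dimensional, testing membership $\U$-isotypic component by $\U$-isotypic component; and since $\fg_i\otimes\fg_j$ for $i\ne j$ is an irreducible $\U_i\times\U_j$-representation occurring in $A\otimes A$ only inside $A_i\otimes A_j\oplus A_j\otimes A_i$, we may restrict attention to a fixed pair $i\ne j$ and the summand $(\fg_i\otimes\fg_j)_-$ of $(A_i\otimes A_j\oplus A_j\otimes A_i)_-$, the remaining factors $A_k$ being irrelevant. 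The claim then becomes: the orthogonal projection of each $m_p-\sigma m_p$ onto $(\fg_i\otimes\fg_j)_-$ vanishes.

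The key computation is the one already performed in the proof of \Cref{pr:only.symm}: for $0\ne a_\bullet\in\fg_\bullet$ and $p\in P(A)$ with components $p_i,p_j$,
\begin{equation*}
  \Braket{m_p\ \big|\ (a_i\otimes a_j)^{-}}
  =
  \tau(p_i a_i)\tau((1-p_j)a_j)-\tau((1-p_i)a_i)\tau(p_j a_j)
  \xlongequal{\ \tau(a_\bullet)=0\ }
  0,
\end{equation*}
using $\tau(a_i)=\tau(a_j)=0$ so that $\tau((1-p_\bullet)a_\bullet)=-\tau(p_\bullet a_\bullet)$, whence the two terms cancel. Since the vectors $(a_i\otimes a_j)^{-}$ generate the antisymmetric copy of $\fg_i\otimes\fg_j$ as a $\U_i\times\U_j$-representation and $\braket{-\mid-}$ is $\U$-invariant, this shows $m_p\perp(\fg_i\otimes\fg_j)_-$ for every $p$, hence $\cat{INQ}(A)\cap(\fg_i\otimes\fg_j)_-=0$ for $i\ne j$. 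Assembling these over all off-diagonal pairs (again Peter--Weyl, now over $\U_S$ for finite $S$, then passing to the infinite product as in \Cref{le:fin.dim.enough}) gives
\begin{equation*}
  \cat{INQ}(A)\bigcap\prod^{\bullet}_{i,j}(\fg_i\otimes\fg_j)_-
  =
  \cat{INQ}(A)\bigcap\prod^{\bullet}_{i}(\fg_i\otimes\fg_i)_-,
\end{equation*}
and since $\cat{INQ}(A)\le\ker\mu\cap\ker\mu^{\circ}$ always, the right-hand side is contained in $\bigl(\prod^{\bullet}_i(\fg_i\otimes\fg_i)_-\bigr)\cap(\ker\mu\cap\ker\mu^{\circ})_-$. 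For the reverse inclusion, note that the diagonal summand $\prod^{\bullet}_i(\fg_i\otimes\fg_i)_-\cap(\ker\mu\cap\ker\mu^{\circ})_-$ is a product over $i$ of the spaces $\fsl_{n_i}^{\otimes2}\cap(\text{antisymmetric})$, each of which is the antisymmetric part of the single-matrix-algebra joint kernel and hence lies in $\cat{INQ}(M_{n_i})\subseteq\cat{INQ}(A)$ by \Cref{eq:mn.mu.mu} (i.e.\ \Cref{pr:p1p.spn.mat}); closing up in the appropriate topology and reassembling the product (as in the proof of \Cref{th:p1p.id}, where $\sum_{p}A m_p$ was shown to respect the product decomposition) finishes this direction.

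The only real subtlety — and the step I expect to need the most care — is bookkeeping the passage between the finite-dimensional claims and the infinite-product statement: one must check that $\cat{INQ}(A)$, and its intersections with the various $\U$-isotypic ``bulk'' pieces, genuinely decompose as (Cartesian or $W^*$) products along $I^2$, so that vanishing of the off-diagonal components can be verified factor-by-factor and then reassembled. This is exactly the kind of argument run in \Cref{le:fin.dim.enough} and \Cref{le:cont.proj}\Cref{item:le:cont.proj:cont.proj} (continuity of the isotypic projections, which are themselves products), so I would invoke those; the residual point is that a $\U$-equivariant, topology-closed subspace of $\prod^{\bullet}_{i,j}A_i\otimes A_j$ is determined by its images under the coordinate projections $\pi_{F}$ to finite sub-products, which is precisely the content cited after \Cref{le:cont.proj}. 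Modulo that routine-but-necessary plumbing, the proposition is an immediate corollary of the sign computation above together with \Cref{pr:p1p.spn.mat}.
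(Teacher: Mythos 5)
Your argument is correct and follows the paper's own proof essentially verbatim: the off-diagonal antisymmetric blocks $\left(\fg_i\otimes\fg_j\right)_-$, $i\ne j$, are killed by exactly the sign computation recorded in \Cref{pr:only.symm} (the `$-$' case), the diagonal blocks are supplied by \Cref{pr:p1p.spn.mat}, and the product/closure bookkeeping is the same Peter--Weyl reduction. The one step to tighten is the asserted inclusion $\cat{INQ}(M_{n_i})\subseteq\cat{INQ}(A)$, which is not literally immediate --- the generators $p\otimes(1_i-p)$, $p\in P(A_i)$, of $\cat{INQ}(M_{n_i})$ differ from the corresponding $m_p=p\otimes(1-p)\in\cat{INQ}(A)$ by the cross term $p\otimes 1_{\widehat{i}}$ living in other blocks, so the containment of $\left(\fg_i\otimes\fg_i\right)_-$ in $\cat{INQ}(A)$ has to be extracted isotypic component by isotypic component (which is why the paper cites \Cref{th:p1p.spn} together with \Cref{pr:p1p.spn.mat} at this point rather than the matrix-algebra statement alone).
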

\begin{proof}
  Because $\fg_i\otimes \fg_j$ has multiplicity 2 in $A\totimes A$ (the symmetric and antisymmetric summands respectively) for $\widetilde{\bullet}\in \{\widehat{\bullet},\ \overline{\bullet}\}$, \Cref{th:p1p.spn} and \Cref{pr:p1p.spn.mat} reduce the claim to simply showing that
  \begin{equation*}
    \forall i\ne j
    \quad:\quad
    \left(\fg_i\otimes \fg_i\right)_-
    \not\le
    \cat{INQ}(A).
  \end{equation*}
  This, in turn, follows from the fact that $\left(\fg_i\otimes \fg_i\right)_-$ is $\braket{-\mid -}_{\tau}$-orthogonal to $m_p$ for any $p\in P(A)$ and any (possibly non-faithful) tracial state $\tau$ on $A$.
\end{proof}

\section{Asides on jointly-continuous conjugation actions}\label{se:jcont.wast}

\begin{definition}\label{def:qset.virt.cls}
  A quantum set $\cX=\left\{X_i\right\}$ is \emph{virtually classical} if at most finitely many $\cL(X_i)$ are non-abelian. 
\end{definition}

\begin{proposition}\label{pr:cont.act.virt.cls}
  For a quantum set $\cX=\left\{X_i\right\}$ the following conditions are equivalent.
  \begin{enumerate}[(a),wide]
  \item\label{item:pr:cont.act.virt.cls:virt.cls} $\cX$ is virtually classical.

  \item\label{item:pr:cont.act.virt.cls:conj.all.n} The conjugation actions on $A^{\ootimes n}$, $A:=\ell^{\infty}(\cX)$ of $\U:=\U(A)$ are jointly continuous for the product topology on $\U$ and the $\sigma$-weak topology on $A^{\ootimes n}$.
    
  \item\label{item:pr:cont.act.virt.cls:conj1} As in \Cref{item:pr:cont.act.virt.cls:conj.all.n}, for the single conjugation action on $A$. 

  \item\label{item:pr:cont.act.virt.cls:conj.some.n} As in \Cref{item:pr:cont.act.virt.cls:conj.all.n}, for \emph{some} (rather than all) $n\in \bZ_{\ge 1}$.
  \end{enumerate}
\end{proposition}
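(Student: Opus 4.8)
The plan is to route everything through the following reformulation, valid for a separately continuous action of a compact group $\U$ on a von Neumann algebra $N$ equipped with its $\sigma$-weak ($=\sigma(N,N_*)$) topology:
\begin{equation*}
  \text{the action is jointly continuous}
  \iff
  \bigl\{\Ad_u:u\in\U\bigr\}\text{ is equicontinuous}
  \iff
  \spn\bigl\{\Ad_u^*\phi:u\in\U\bigr\}\text{ is finite-dimensional for every }\phi\in N_*.
\end{equation*}
Separate continuity is already available from \Cref{le:cont.proj}\Cref{item:le:cont.proj:linfty}. The first equivalence is standard compact-group functional analysis: joint continuity together with compactness of $\U$ forces equicontinuity via the tube lemma, while equicontinuity together with separate continuity yields joint continuity by the usual halving-of-neighborhoods argument. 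The second is the bipolar characterization of equicontinuity for the weak topology $\sigma(N,N_*)$ (\cite[\S\S 20--21]{k_tvs-1}), combined with the fact that each $\Ad_u^*$ is an isometry of the Banach space $N_*$, so that a family in $N_*$ equicontinuous for the weak topology is exactly one spanning a finite-dimensional subspace. The implications \Cref{item:pr:cont.act.virt.cls:conj.all.n} $\Rightarrow$ \Cref{item:pr:cont.act.virt.cls:conj1} $\Rightarrow$ \Cref{item:pr:cont.act.virt.cls:conj.some.n} are trivial, the latter two conditions being instances of the former, so only \Cref{item:pr:cont.act.virt.cls:virt.cls} $\Rightarrow$ \Cref{item:pr:cont.act.virt.cls:conj.all.n} and \Cref{item:pr:cont.act.virt.cls:conj.some.n} $\Rightarrow$ \Cref{item:pr:cont.act.virt.cls:virt.cls} require work.

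For \Cref{item:pr:cont.act.virt.cls:virt.cls} $\Rightarrow$ \Cref{item:pr:cont.act.virt.cls:conj.all.n} I would write $A\cong F\times A_{ab}$, with $F$ the (finite-dimensional) product of the finitely many non-abelian $\cL(X_i)$ and $A_{ab}$ abelian, so that $\U(A)=\U(F)\times\U(A_{ab})$ with $\U(A_{ab})$ acting trivially by conjugation; write $I_0$ for the finite index set of the $F$-factors. Decomposing $A^{\ootimes n}\cong\prod^{\cat{W}^*}_{\mathbf i}A_{\mathbf i}$ as in \Cref{eq:l.tens.n.decomp}, the operator $\Ad_u$ acts on a block $A_{\mathbf i}$ only through the tuple $(\Ad_{w_i})_{i\in I_0}$ ($w\in\U(F)$) and only through the \emph{pattern} recording which positions of $\mathbf i$ lie in $I_0$ and where; since $I_0$ is finite and there are only $n$ positions, there are finitely many patterns. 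Consequently $\Ad_w\mapsto(\text{its action on each pattern})$ embeds $\spn\{\Ad_u:u\in\U(A)\}$ into a finite direct sum of finite-dimensional operator spaces, so this span — and a fortiori every orbit $\spn\{\Ad_u^*\phi\}$ — is finite-dimensional, and joint continuity follows from the reformulation above, for every $n$.

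For \Cref{item:pr:cont.act.virt.cls:conj.some.n} $\Rightarrow$ \Cref{item:pr:cont.act.virt.cls:virt.cls} I argue the contrapositive: if infinitely many $\dim X_{i_k}\ge 2$ ($k\in\bZ_{\ge 1}$), then for \emph{every} $n$ the conjugation action on $A^{\ootimes n}$ fails to be equicontinuous. In the block $A_{\mathbf i_k}\cong M_{\dim X_{i_k}}^{\otimes n}$ attached to the constant tuple $\mathbf i_k:=(i_k,\dots,i_k)$, the conjugation action of $\U(\dim X_{i_k})$ is nontrivial (as $\dim X_{i_k}\ge 2$); choose a normal functional $\psi_k$ on that block with $\|\psi_k\|\le 1$ that is not conjugation-invariant, and put $\phi:=\sum_k 2^{-k}\psi_k\in(A^{\ootimes n})_*$. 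A unitary supported at the single coordinate $i_k$ fixes every $\psi_l$ with $l\ne k$ and moves $\psi_k$ within the block $A_{\mathbf i_k}$; letting it range over $\U(\dim X_{i_k})$, the differences $\Ad_u^*\phi-\phi$ sweep out a nonzero subspace $Z_k\le(A_{\mathbf i_k})_*$. The $Z_k$ lie in pairwise distinct $\ell^1$-summands of the predual, so they are jointly linearly independent and $\spn\{\Ad_u^*\phi:u\in\U\}$ is infinite-dimensional; equicontinuity, hence joint continuity, therefore fails.

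The step I expect to be the main obstacle is the first one: nailing down that equicontinuity for the (non-barreled) $\sigma$-weak topology is precisely finite-dimensionality of all $\Ad_u^*$-orbits, rather than merely their weak precompactness, and verifying that the separate continuity supplied by \Cref{le:cont.proj} suffices to promote equicontinuity to joint continuity in this setting. Once that reformulation is secured, the two remaining implications reduce to the short dimension counts sketched above.
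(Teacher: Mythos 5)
Your proposal is correct, but it takes a genuinely different route from the paper's. Where you characterize joint continuity of the compact-group action via equicontinuity of $\{\Ad_u\}$ and hence via finite-dimensionality of every coorbit span $\spn\{\Ad_u^*\phi\}\le (A^{\ootimes n})_*$, the paper instead proves an abstract claim about factor-wise scaling actions of tori $\bT^I$ on $\ell^\infty$-sums of dual Banach spaces: using Hahn--Banach it builds a net $(\psi_F)$ converging to $0$ $\sigma$-weakly but not uniformly on a single $\bT^I$-orbit, and then locates such a torus inside $\U$ acting on a strand of $1$-dimensional weight spaces $F_i\le A_i$ whenever infinitely many $X_i$ have dimension $\ge 2$. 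Your negative direction replaces that construction with a single explicit normal functional $\phi=\sum_k 2^{-k}\psi_k$ whose coorbit span meets infinitely many pairwise independent $\ell^1$-blocks of the predual -- arguably more elementary, since no Hahn--Banach extension is needed, only the block structure of $(A^{\ootimes n})_*$. Your positive direction also makes explicit, via the pattern/dimension count, what the paper dismisses as ``jointly continuous by direct examination.'' The two points to be careful about are exactly the ones you flag: the equivalence between equicontinuity for $\sigma(N,N_*)$ and finite-dimensionality of coorbit spans requires the uniform norm bound supplied by the $\Ad_u^*$ being isometries of the predual (equicontinuity per se gives a \emph{bounded} subset of a finite-dimensional subspace), and the passage from equicontinuity plus separate continuity (available from \Cref{le:cont.proj}) to joint continuity must be run by hand since $\ell^\infty(\cX)$ is not barreled in the $\sigma$-weak topology; both checks go through as you indicate. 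One thing the paper's packaging buys that yours does not immediately: the abstract torus-scaling claim is reused verbatim in the proof of \Cref{th:cofin.ablnz} for general von Neumann algebras, whereas your block-by-block functional would need to be re-manufactured inside a hereditarily atomic subalgebra there.
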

\begin{proof}
  The implications \Cref{item:pr:cont.act.virt.cls:conj.all.n} $\Rightarrow$ \Cref{item:pr:cont.act.virt.cls:conj1} $\Rightarrow$ \Cref{item:pr:cont.act.virt.cls:conj.some.n} are obvious; we address two others.  

  \begin{enumerate}[label={},wide]

  \item\textbf{\Cref{item:pr:cont.act.virt.cls:virt.cls} $\Rightarrow$ \Cref{item:pr:cont.act.virt.cls:conj.all.n}:} Decompose $A$ as $A\cong B\times A_{ab}$, with the latter factor collecting the 1-dimensional factors $\cL(X_i)$, $\dim X_i=1$. $\U$ acts trivially on $A_{ab}$, so the action on $A^{\ootimes n}$ identifies with one on an $\ell^{\infty}$-sum
    \begin{equation}\label{eq:linf.botimes}
      \prod^{\cat{W}^*}_{j\in J} B^{\ootimes m_j}
      ,\quad
      0\le m_j\le n.
    \end{equation}
    If $B$ is finite-dimensional the action (factoring through $\U\xrightarrowdbl{} \U(B)$) is plainly jointly continuous by direct examination.
    
  \item\textbf{\Cref{item:pr:cont.act.virt.cls:conj.some.n} $\Rightarrow$ \Cref{item:pr:cont.act.virt.cls:virt.cls}:} I first claim that for non-trivial normed spaces $\left(E_i,\ \|\cdot\|_i\right)_{i\in I}$, the factor-wise scaling action
    \begin{equation}\label{eq:ti.on.dual}
      \bT^I := \left(\bS^1\right)^I
      \circlearrowright
      \ell^{\infty}\left(E_i',\ \text{dual norm }\|\cdot\|'_i\right)_{i\in I}
    \end{equation}
    (with primes denoting Banach-space duals) is jointly continuous precisely when $I$ is finite.

    One direction is obvious (if $I$ is finite there are no continuity issues), so assume $I$ infinite and fix elements
    \begin{equation*}
      0\ne e_i\in E_i
      ,\quad
      \sum_{i\in I}\|e_i\|_i<\infty.
    \end{equation*}
    For every finite set $F$ of elements in the predual $\ell^1\left(E_i,\ \|\cdot\|_i\right)_{i\in I}$ of $\ell^{\infty}\left(E'_i\right)_{i\in I}$ consider a functional
    \begin{equation*}
      \psi\in
      \ell^{\infty}\left(E'_i\right)_{i\in I}\cong \ell^1\left(E_i\right)'_{i\in I}
      \quad:\quad
      \psi|_F\equiv 0
      \quad\text{and}\quad
      \exists \left(\varepsilon_i\right)_i\in \left\{\pm 1\right\}^I
      \ :\ 
      \psi\left(
        \left(\varepsilon_i e_i\right)_i
      \right)=1.
    \end{equation*}
    This is possible by \emph{Hahn-Banach} \cite[Theorem III.6.2]{conw_fa}, given that the tuples
    \begin{equation}\label{eq:epse}
      \left(\varepsilon_i e_i\right)_i
      \in
      \ell^1\left(E_i\right)_{i\in I}
      ,\quad
      \mathbf{\varepsilon}:=\left(\varepsilon_i\right)_i\in \left\{\pm 1\right\}^I
    \end{equation}
    are linearly independent. The \emph{net} \cite[Definition 11.2]{wil_top} $\left(\psi_F\right)_F$ (with finite sets $F$ ordered by inclusion) converges to 0 $\sigma$-weakly but not in the topology of uniform convergence on orbits of the $\bT^I$-action, for all \Cref{eq:epse} lie on one orbit. The action \Cref{eq:ti.on.dual}, then, cannot be jointly continuous. 

    Claim in hand, assume $\U \circlearrowright A^{\ootimes n}$ jointly continuous. For all
    \begin{equation*}
      A_i\cong M_{d_i}\cong \cL(X_i)
      ,\quad
      d_i:=\dim X_i\ge 2
    \end{equation*}
    there is some circle $\bS^1\cong \bS_i\le \U_i:=\U(A_i)$ with $z\in \bS^1\cong \bS_i$ acting as $z$-scaling on some 1-dimensional space $F_i\le A_i$. The preceding abstract remark applies to the action of the torus
    \begin{equation*}
      \bT^{\left\{i\ :\ d_i\ge 2\right\}}
      :=
      \prod_{d_i\ge 2}\bS_i
      \le
      \U_{\left\{i\ :\ d_i\ge 2\right\}}
    \end{equation*}
    on
    \begin{equation*}
      \ell^{\infty}\left(F_i\right)_{d_i\ge 2}
      \cong
      \ell^{\infty}\left(F_i\otimes \bC\right)_{d_i\ge 2}
      \le
      \prod^{\cat{W}^*}_{d_i\ge 2}A_i\ootimes A_k^{\ootimes (n-1)}
      \le
      A^{\ootimes n}
      \quad
      \left(\text{fixed $k$}\right).
    \end{equation*}
  \end{enumerate}
  This completes the implication circle.
\end{proof}

\begin{remarks}\label{res:joint.cont}
  \begin{enumerate}[(1),wide]
  \item There is an alternative take on the implication \Cref{item:pr:cont.act.virt.cls:virt.cls} $\Rightarrow$ \Cref{item:pr:cont.act.virt.cls:conj.all.n} in the proof of \Cref{pr:cont.act.virt.cls} above. For finite-dimensional $B$ the $\U$-action on \Cref{eq:linf.botimes} is dual to one on the predual
    \begin{equation*}
      \ell^1\left(B^{\otimes m_j}\right)_{j\in J}
      \text{ of \Cref{eq:linf.botimes}}.
    \end{equation*}
    The latter predual action has finitely many isotypic components (with continuous isotypic projections), so the claimed joint continuity follows from \Cref{le:fin.isot.jcont} below.
    
  \item It is certainly possible, in general, for a compact group to act (separately continuously and) isotypically on a locally convex quasi-complete space, but not \emph{jointly} continuously.

    We will adapt the gadget employed in \cite[Problem 52]{hal_hspb_2e_1982} (and on \cite[p.110]{halm_sm_2e_1957}) to produce closed subspaces with non-closed algebraic sum. Consider the embedding $V\xrightarrow{\iota}\widetilde{V}$ of a quasi-complete, incomplete locally convex space into its \emph{completion} \cite[\S 15.3(1)]{k_tvs-1}, and set
    \begin{equation*}
      W_0:=V\oplus \{0\}\le V\oplus\widetilde{V}=:F
      \quad\text{and}\quad
      W_1
      :=
      \text{graph of $\iota$}      
      :=
      \left\{(v,\iota v)\ :\ v\in V\right\}
      \le
      F
    \end{equation*}
    The sum
    \begin{equation*}
      W:=
      W_0+W_1
      =
      V\oplus V
      \le
      F
    \end{equation*}
    is direct ($\iota$ being injective) and henceforth subspace-topologized. Equip $W$ with
    \begin{itemize}[wide]
    \item the $\bS^1$-action $z\mapsto z$ on $W_0$;

    \item the $\bS^1$-action $z\mapsto z^*=\frac 1z$ on $W_1$;

    \item and a $\bZ/2$-action interchanging $W_{0,1}$ via
      \begin{equation*}
        W_0\ni
        (v,0)
        \xleftrightarrow{\quad}
        (v,\iota v)
        \in W_1.
      \end{equation*}
    \end{itemize}
    This results in an isotypic action of the orthogonal group
    \begin{equation*}
      \OR(2)\cong \bS^1\rtimes \left(\bZ/2\right)
      ,\quad
      \bZ/2\text{ acting by inversion},
    \end{equation*}
    separately continuous but certainly not jointly so. Indeed, there are nets
    \begin{equation*}
      W_0\oplus W_1
      \ni
      w_{\lambda}=w_{0\mid\lambda}+w_{1\mid\lambda}
      \xrightarrow[\quad\lambda\quad]{\quad}
      0
      ,\quad
      w_{0,1\mid\lambda}\not \in\text{some neighborhood }U\ni 0
    \end{equation*}
    by construction. One may as well assume $p_{\lambda}:=p\left(w_{0\mid \lambda}+w_{1\mid \lambda}\right)\xrightarrow[\lambda]{}\infty$ for some continuous \emph{seminorm} \cite[\S 18.1]{k_tvs-1} $p$ on $F$, and 
    \begin{equation*}
      e^{-2\pi i p_{\lambda}}
      \triangleright
      \left(w_{0\mid\lambda}+w_{1\mid\lambda}\right)
      =
      e^{-2\pi i p_{\lambda}} w_{0\mid\lambda}
      +
      e^{2\pi i p_{\lambda}} w_{1\mid\lambda}
    \end{equation*}
    will not converge to $0\in W_0\oplus W_1$.
  \end{enumerate}
\end{remarks}

\begin{lemma}\label{le:fin.isot.jcont}
  If a separately continuous action $\triangleright : \G\circlearrowright E$ of a compact group on a locally convex space has finitely many isotypic components, then the dual action $\G\circlearrowright E'$ on the weak dual is jointly continuous. 
\end{lemma}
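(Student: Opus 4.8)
Unwinding the conclusion, the dual action is $g\cdot\phi:=\phi\circ(g^{-1}\triangleright-)$ and the weak dual carries $\sigma(E',E)$, so joint continuity means: whenever a net $(g_\lambda,\phi_\lambda)\to(g,\phi)$ in $\G\times E'$, one has $\langle\phi_\lambda,\,g_\lambda^{-1}\triangleright x\rangle\to\langle\phi,\,g^{-1}\triangleright x\rangle$ for every $x\in E$. The only thing preventing an immediate appeal to weak$^*$-convergence of $(\phi_\lambda)$ is that the test vector $g_\lambda^{-1}\triangleright x$ moves with $\lambda$. The plan is to show that this motion happens inside one \emph{fixed finite-dimensional} subspace of $E$ depending only on $x$, on which a weak$^*$-convergent net of functionals is automatically (eventually) bounded, and then to finish with a two-term estimate.

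So the key reduction I would carry out first is: for every $x\in E$ the orbit span $L_x:=\spn\{g\triangleright x:g\in\G\}$ is finite-dimensional. By hypothesis the nonzero isotypic components of $E$ are finitely many, $E_{\rho_1},\dots,E_{\rho_k}$ with $\rho_i\in\widehat{\G}$, and standard Peter--Weyl theory gives a decomposition $E=E_{\rho_1}\oplus\cdots\oplus E_{\rho_k}$ into invariant subspaces; writing $x=\sum_i x_i$ accordingly, $L_x\subseteq\bigoplus_i \spn\{g\triangleright x_i\}$ and each summand lies in $E_{\rho_i}$. Now within a single isotypic component $E_\rho\cong\rho\otimes M$ (trivial $\G$-action on the multiplicity space $M$) one has $g\triangleright x_i=(\rho(g)\otimes\mathrm{id}_M)x_i$, so $\spn\{g\triangleright x_i:g\}$ is contained in the image of the linear map $\End(\rho)\to E_\rho$, $A\mapsto(A\otimes\mathrm{id}_M)x_i$, hence has dimension at most $(\dim\rho)^2$. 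Summing, $\dim L_x\le\sum_i(\dim\rho_i)^2$, a bound uniform in $x$.

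With that in hand I would close the argument as follows. Separate continuity of $\triangleright$ makes the orbit map $g\mapsto g\triangleright x$ continuous $\G\to E$, hence so is $g\mapsto g^{-1}\triangleright x$; thus $g_\lambda^{-1}\triangleright x\to g^{-1}\triangleright x$, and all these vectors lie in the finite-dimensional $L_x$. Restricting functionals to $L_x$, the weak$^*$-convergence $\phi_\lambda\to\phi$ yields $\phi_\lambda|_{L_x}\to\phi|_{L_x}$ pointwise on $L_x$, which, $L_x$ being finite-dimensional, is convergence in the unique Hausdorff topology of $L_x^{*}$; in particular $(\phi_\lambda|_{L_x})$ is eventually norm-bounded. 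Then
\[
  \langle g_\lambda\cdot\phi_\lambda,\,x\rangle-\langle g\cdot\phi,\,x\rangle=\langle\phi_\lambda,\,g_\lambda^{-1}\triangleright x-g^{-1}\triangleright x\rangle+\langle\phi_\lambda-\phi,\,g^{-1}\triangleright x\rangle,
\]
where the first term tends to $0$ (a bounded net of functionals applied to a null net in $L_x$) and the second tends to $0$ by weak$^*$-convergence against the fixed vector $g^{-1}\triangleright x$. Hence $g_\lambda\cdot\phi_\lambda\to g\cdot\phi$ weak$^*$.

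The main obstacle is the finite-dimensionality of $L_x$, and more precisely getting the decomposition $E=\bigoplus_{i=1}^k E_{\rho_i}$: this is where one must either invoke quasi-completeness of $E$ (to have the Peter--Weyl isotypic projections defined on all of $E$, as in \Cref{le:cont.proj}) or else read ``finitely many isotypic components'' as already asserting such a direct-sum decomposition — which is exactly the situation in which the lemma is applied (to the $\ell^1$-sum predual in \hyperref[res:joint.cont]{Remarks~\ref*{res:joint.cont}}, where the components are closed and the decomposition is even a topological direct sum). Once $L_x$ is known to be finite-dimensional, everything else is soft, resting only on the standard fact that on a finite-dimensional subspace weak$^*$-convergence upgrades to uniform convergence on bounded sets.
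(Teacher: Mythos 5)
Your argument is correct and is essentially the paper's own proof written out in full: the paper likewise derives joint continuity from the observation that the finite-isotypic-component hypothesis forces $\G$-orbits in $E$ to span finite-dimensional subspaces, so that on $E'$ the weak-$*$ topology coincides with that of uniform convergence on orbits (your two-term estimate is precisely the verification of that coincidence, and your $(\dim\rho)^2$ bound is the standard reason orbit spans are finite-dimensional). The quasi-completeness caveat you flag is a fair observation about the lemma as stated, but it is harmless where the lemma is applied, since there $E$ is an $\ell^1$-sum and hence complete.
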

\begin{proof}
  We have separate continuity in any case \cite[p.VIII.11, Proposition 3(i)]{bourb_int_en_7-9}, and the finite-isotypic-component assumption ensures that $\G$-orbits in $E$ are finite-dimensional, so the topology of pointwise converge on $E'$ coincides with that of uniform convergence on those $\G$-orbits. 
\end{proof}

\Cref{th:cofin.ablnz} will supersede \Cref{pr:cont.act.virt.cls}, relying on its proof. The statement refers to \emph{dual topologies} on a von Neumann algebra $A$, by which we mean any locally convex topology $\cT$ with the property that the dual $(A,\cT)'$ is the \emph{predual} \cite[Corollary III.3.9]{tak1} $A_*$. By the celebrated \emph{Arens-Mackey theorem} \cite[\S 21.4(2)]{k_tvs-1}, these are precisely the locally convex topologies
\begin{itemize}[wide]
\item at least as fine as the $\sigma$-weak;

\item and at most as fine as the \emph{Mackey topology} \cite[\S 21.4]{k_tvs-1}.
\end{itemize}
They all agree on the unitary group $\U(A)$:
\begin{itemize}[wide]
\item Since we are restricting attention to the bounded set $\U(A)$ there is no distinction \cite[Lemma II.2.5]{tak1} between the weak, strong and strong$^*$ topologies of \cite[\S II.2]{tak1} and their respective $\sigma$ counterparts.

\item The weak and strong topologies coincide by \cite[Solution 20]{hal_hspb_2e_1982} and hence so do the weak and strong$^*$ because the former is $*$-invariant \cite[Solution 110]{hal_hspb_2e_1982}.

\item And finally, strong$^*$ = Mackey on bounded sets \cite[Theorem II.7]{zbMATH03253462}. 
\end{itemize}

In the following statement the symbol $\totimes$ stands for either the maximal or the spatial von Neumann tensor product \cite[\S\S *.1 and 8.2]{zbMATH03248454}. 

\begin{theorem}\label{th:cofin.ablnz}
  For a von Neumann algebra $A$ the following conditions are equivalent. 

  \begin{enumerate}[(a),wide]
  \item\label{item:th:cofin.ablnz:virt.cls} The kernel of the abelianization $A\xrightarrowdbl{} A_{ab}$ is finite-dimensional. 

  \item\label{item:th:cofin.ablnz:conj.all.n} The conjugation actions on $A^{\totimes n}$ of $\U:=\U(A)$ are jointly continuous for the product topology on $\U$ and the $\sigma$-weak topology on $A^{\ootimes n}$.

  \item\label{item:th:cofin.ablnz:conj1} As in \Cref{item:th:cofin.ablnz:conj.all.n}, for the single conjugation action on $A$. 

  \item\label{item:th:cofin.ablnz:conj.some.n} As in \Cref{item:th:cofin.ablnz:conj.all.n}, for \emph{some} (rather than all) $n\in \bZ_{\ge 1}$.
  \end{enumerate}
\end{theorem}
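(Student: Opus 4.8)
The plan is to follow the template already established in the proof of \Cref{pr:cont.act.virt.cls}, upgrading it from the hereditarily-atomic setting to a general von Neumann algebra. The implications \Cref{item:th:cofin.ablnz:conj.all.n} $\Rightarrow$ \Cref{item:th:cofin.ablnz:conj1} $\Rightarrow$ \Cref{item:th:cofin.ablnz:conj.some.n} are formal, so the substance is the cycle closing with \Cref{item:th:cofin.ablnz:conj.some.n} $\Rightarrow$ \Cref{item:th:cofin.ablnz:virt.cls} $\Rightarrow$ \Cref{item:th:cofin.ablnz:conj.all.n}. For the ``easy'' direction \Cref{item:th:cofin.ablnz:virt.cls} $\Rightarrow$ \Cref{item:th:cofin.ablnz:conj.all.n}: write $A\cong B\times A_{ab}$ where $B:=\ker(A\twoheadrightarrow A_{ab})$ is finite-dimensional. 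The conjugation action of $\U$ on $A^{\totimes n}$ is trivial on every tensor factor coming from $A_{ab}$, so it factors through the compact group $\U(B)$ and, decomposing $A^{\totimes n}$ accordingly, identifies with the $\U(B)$-action on a $W^*$-sum $\prod^{\cat{W}^*}_{j} B^{\otimes m_j}$ with $0\le m_j\le n$. This predual action has only finitely many isotypic types (since $B$ is finite-dimensional there are only finitely many irreducibles of $\U(B)$ occurring in the $B^{\otimes m_j}$), so joint continuity of the dual action follows from \Cref{le:fin.isot.jcont} exactly as in \hyperref[res:joint.cont]{Remarks~\ref*{res:joint.cont}(1)}.

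For the hard direction \Cref{item:th:cofin.ablnz:conj.some.n} $\Rightarrow$ \Cref{item:th:cofin.ablnz:virt.cls} I would argue contrapositively: suppose $B:=\ker(A\twoheadrightarrow A_{ab})$ is infinite-dimensional and deduce that conjugation on $A^{\totimes n}$ is not jointly continuous for any $n\ge 1$. The key is the abstract obstruction already isolated in the proof of \Cref{pr:cont.act.virt.cls}: for an infinite family of non-trivial normed spaces $(E_i)_{i\in I}$, the factor-wise circle action $\bT^I\circlearrowright \ell^{\infty}(E_i')_i$ fails to be jointly continuous (the Hahn--Banach construction of a net of functionals vanishing on a growing finite set but taking value $1$ on a fixed orbit). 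So it suffices to locate, inside $A^{\totimes n}$, a $W^*$-subspace of the form $\ell^{\infty}(E_i')_{i\in I}$ with $I$ infinite, on which a subtorus of $\U$ acts by factor-wise scaling. To produce such a family I would use structure theory: since $B$ is infinite-dimensional and has trivial abelian part (it is the kernel of abelianization, hence has no nonzero central abelian summand — equivalently $B$ has no nonzero finite-dimensional abelian quotient surviving in $A_{ab}$; more precisely $B$ contains no nonzero abelian direct summand), one of two things happens: either $B$ has a direct summand which is a type $\mathrm{I}$ homogeneous algebra $M_k\ootimes L^{\infty}(Z)$ with $k\ge 2$ over an atomless or infinite base, or $B$ has a properly infinite / type $\mathrm{II}$ or $\mathrm{III}$ summand. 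In every such case $B$ contains an infinite orthogonal family of nonzero projections $(q_i)_{i\in I}$ together with, for each $i$, a self-adjoint partial isometry supported under $q_i$ implementing a copy of $\bS^1\le \U(q_i B q_i)$ that scales a one-dimensional subspace $F_i\le q_iBq_i$; the $(q_i B q_i)_i$ then give a $W^*$-sum $\prod^{\cat{W}^*}_i q_i B q_i \le B\le A$ into which $\ell^{\infty}(F_i\otimes\bC)_i \le A_i\ootimes A^{\ootimes(n-1)}$ embeds, and the product torus $\prod_i \bS^1 \le \U(B)\subseteq \U(A)$ acts on it by factor-wise scaling. Invoking the abstract non-joint-continuity statement finishes the contrapositive.

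The main obstacle I anticipate is the structural step: extracting from ``$\ker(A\to A_{ab})$ is infinite-dimensional'' an explicit infinite orthogonal family of projections $q_i$ each carrying a nontrivial scaling circle on a one-dimensional subspace. The cleanest route is probably to observe that $B$ has no nonzero abelian central summand, hence (by the type decomposition) $B$ is a $W^*$-product of pieces each of which is either properly noncommutative type $\mathrm{I}$ or of type $\mathrm{II}$/$\mathrm{III}$; in any such piece one can find, inductively, an infinite sequence of mutually orthogonal nonzero projections dominating $2\times 2$ matrix units (using halving of projections in the non-type-$\mathrm{I}_1$ setting), and a $2\times 2$ block of matrix units already supplies the requisite scaling circle. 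One should double-check the borderline case where $B$ is a product of matrix algebras of unbounded but finite size (this is precisely the hereditarily-atomic situation, already handled by \Cref{pr:cont.act.virt.cls}, so it can be cited rather than redone) versus the genuinely ``continuous'' cases, and confirm that the embedding of the $\ell^\infty$-sum is $\sigma$-weakly closed and $\totimes$-compatible for both the maximal and spatial tensor products — but these are routine once the projections are in hand. A secondary point to get right is that the abstract lemma was stated for the factor-wise action on $\ell^{\infty}(E_i')$ with $E_i$ the preduals; here $E_i = (F_i)_*\cong \bC$ and $E_i' \cong \bC$ carries the scaling action, so the hypotheses match verbatim.
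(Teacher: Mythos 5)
Your proposal is correct and follows essentially the same route as the paper: the easy direction via the decomposition $A\cong A_{ab}\times(\text{finite-dimensional})$ together with \Cref{le:fin.isot.jcont}, and the hard direction by locating inside $A$ an infinite $W^*$-product of matrix algebras of size $\ge 2$ (equivalently, infinitely many orthogonal scaling circles) and invoking the torus non-continuity obstruction from the proof of \Cref{pr:cont.act.virt.cls}. The structural step you flag as the main obstacle is precisely the one the paper dispatches in a single sentence via the type decomposition \Cref{eq:gen.t1}, so your more detailed account (halving projections, $2\times 2$ matrix units) is a harmless elaboration rather than a divergence.
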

\begin{proof}
  Condition \Cref{item:th:cofin.ablnz:virt.cls} simply means that $A\cong A_{ab}\times A'$ for some finite-dimensional $A'$. In terms of the \emph{type classification/decomposition} of \cite[Theorems V.1.19 and V.1.27]{tak1}, it also means that
  \begin{itemize}[wide]
  \item $A$ is of type I;
  \item and the non-abelian factor in the resulting \cite[Theorem V.1.27]{tak1} decomposition
    \begin{equation}\label{eq:gen.t1}
      A\cong \prod_{\text{cardinals $\lambda$}}^{\cat{W}^*}A_{\lambda}\ootimes \cL(H_{\lambda})
      ,\quad
      \begin{aligned}
        A_{\lambda}&\quad\text{abelian}\\
        \dim H_{\lambda} &= \lambda
      \end{aligned}
    \end{equation}
    the factors with $\lambda\ge 2$ are finite in number and all finite-dimensional. 
  \end{itemize}
  The implications \Cref{item:th:cofin.ablnz:conj.all.n} $\Rightarrow$ \Cref{item:th:cofin.ablnz:conj1} $\Rightarrow$ \Cref{item:th:cofin.ablnz:conj.some.n} are again formal, and \Cref{item:th:cofin.ablnz:virt.cls} $\Rightarrow$ \Cref{item:th:cofin.ablnz:conj.all.n} follows essentially verbatim as in the proof of \Cref{pr:cont.act.virt.cls}. As to \Cref{item:th:cofin.ablnz:conj.some.n} $\Rightarrow$ \Cref{item:th:cofin.ablnz:virt.cls}, here too we can repurpose the argument employed in proving \Cref{pr:cont.act.virt.cls}.
  
  Indeed, joint continuity will fail whenever there is a von Neumann subalgebra
  \begin{equation*}
    \prod^{\cat{W}^*}_{i\in I}M_{n_i}
    \le
    A
    ,\quad
    \left\{i\in I\ :\ n_i\ge 2\right\}
    \text{ infinite},
  \end{equation*}
  and such embeddings do exist in types $II$ and $III$ as well as type $I$ when \Cref{eq:gen.t1} either has infinitely many factors or at least one with infinite $\lambda$. 
\end{proof}

In this context of unitary symmetries, note also the following characterization of hereditary atomicity supplementing \cite[Proposition 5.4]{zbMATH07287276}.

\begin{lemma}\label{le:her.at.u.cpct}
  A von Neumann algebra is hereditarily atomic if and only if its unitary group is compact in the dual topology. 
\end{lemma}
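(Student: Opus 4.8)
The argument splits along the biconditional; the forward direction is a short topology computation and the converse locates an obstruction inside a $W^*$-subalgebra.

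\textbf{Hereditary atomicity $\Rightarrow$ compactness.} By definition (and \cite[Proposition 5.4]{zbMATH07287276}) such an $A$ has the form \eqref{eq:mprod}, $A\cong\prod^{\cat{W}^*}_{i\in I}\cL(X_i)$ with $n_i:=\dim X_i<\infty$, and as a set $\U(A)$ is the full Cartesian product $\prod_{i\in I}\U(n_i)$ of compact groups. I would show that on this norm-bounded subset the $\sigma$-weak topology coincides with the product topology, so that compactness follows from Tychonoff. Indeed, the predual of $A$ is the $\ell^1$-sum $\ell^1\big(\cL(X_i)_*\big)_{i\in I}$ (as in \eqref{eq:l1.predual}), in which the finitely supported tuples $\bigoplus_i\cL(X_i)_*$ are dense; hence on the bounded set $\U(A)$ the $\sigma$-weak topology is that of convergence tested against $\bigoplus_i\cL(X_i)_*$, i.e. coordinatewise convergence, and since each $\cL(X_i)$ is finite-dimensional this is coordinatewise norm convergence, which is exactly the product topology. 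As recalled before \Cref{th:cofin.ablnz}, all dual topologies restrict to the $\sigma$-weak one on the bounded set $\U(A)$, so $\U(A)$ is compact in the dual topology.

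\textbf{Compactness $\Rightarrow$ hereditary atomicity, contrapositively.} I would use two elementary facts: for a unital $W^*$-subalgebra $C\le A$ the $\sigma$-weak topology of $A$ restricts to the $\sigma$-weak topology of $C$ (its predual $C_*$ being a quotient of $A_*$), and $C$ is $\sigma$-weakly closed in $A$; hence $\U(C)=C\cap\U(A)$ is a closed subset of $\U(A)$, and compactness of $\U(A)$ forces compactness of $\U(C)$. It therefore suffices to exhibit, for a non-hereditarily-atomic $A$, a unital $W^*$-subalgebra of $A$ with non-compact unitary group. By \cite[Proposition 5.4]{zbMATH07287276}, $A$ not hereditarily atomic means $A$ has a non-atomic unital $W^*$-subalgebra $B$; fix a nonzero projection $e\in B$ dominating no minimal projection of $B$, so the corner $D:=eBe$ has no minimal projections. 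A maximal abelian $*$-subalgebra $N\le D$ then has no minimal projection either — a minimal projection $q\in N$ would make $Nq=\bC q$ maximal abelian in $qDq$, forcing $qDq=\bC q$ and hence $q$ minimal in $D$ — so $N$ is diffuse abelian and contains a unital copy $L$ of $L^{\infty}[0,1]$. Adjoining the complementary projection $1_A-e$ gives the unital $W^*$-subalgebra $\bC(1_A-e)\oplus L\le A$, with unitary group $\bS^1\times\U\big(L^{\infty}[0,1]\big)$ (just $\U(L^{\infty}[0,1])$ when $e=1_A$). Finally $\U\big(L^{\infty}[0,1]\big)$ is not $\sigma$-weakly ($=\sigma(L^{\infty},L^{1})$) compact: by Riemann--Lebesgue the unitaries $x\mapsto e^{2\pi i n x}$ converge $\sigma$-weakly to $0\notin\U(L^{\infty})$, so $\U\big(L^{\infty}[0,1]\big)$ is not even $\sigma$-weakly closed in the unit ball. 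Hence $\U(A)$ is not compact.

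\textbf{Main obstacle.} Everything above is routine except the structural reduction in the converse — passing from the bare failure of hereditary atomicity to an honest diffuse abelian unital $W^*$-subalgebra of $A$ — which is the only step requiring genuine von Neumann algebra input (corners, maximal abelian subalgebras, and the Lebesgue model $L^{\infty}[0,1]$ for diffuse abelian algebras), all of it standard and already packaged by \cite[Proposition 5.4]{zbMATH07287276}.
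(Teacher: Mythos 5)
Your proof is correct and follows essentially the same route as the paper's: the forward direction is the "immediate" identification of the dual topology on $\U(A)$ with the compact product topology, and the converse is exactly the paper's appeal to (the proof of) \cite[Proposition 5.4]{zbMATH07287276} --- namely that a non-hereditarily-atomic von Neumann algebra contains a $\sigma$-weakly closed unital copy of $L^{\infty}([0,1])$, whose unitary group fails to be $\sigma$-weakly compact (via Riemann--Lebesgue). You merely spell out the corner/MASA construction of that $L^{\infty}$ copy, which the paper leaves to the cited reference.
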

\begin{proof}
  The implication $(\Rightarrow)$ is immediate, and the converse $(\Leftarrow)$ follows from (the proof of) \cite[Proposition 5.4]{zbMATH07287276}: the unitary group of $L^{\infty}:=L^{\infty}([0,1])$ is of course not compact, so the $W^*$-algebra in question can contain no ($\sigma$-weakly closed) copy of $L^{\infty}$. 
\end{proof}



\addcontentsline{toc}{section}{References}

\def\polhk#1{\setbox0=\hbox{#1}{\ooalign{\hidewidth
  \lower1.5ex\hbox{`}\hidewidth\crcr\unhbox0}}}

\Addresses

\end{document}